\newtheorem{theorem}{Theorem}[section]
\newtheorem{lemma}[theorem]{Lemma}
\newtheorem{corollary}[theorem]{Corollary}
\newtheorem{proposition}[theorem]{Proposition}
\newcommand{\ul}{\underline}
\begin{document}
\title{Identities in unitriangular and gossip monoids}
\maketitle

\begin{center}
MARIANNE JOHNSON\footnote{School of Mathematics, University of Manchester,
Manchester M13 9PL, UK.\\
Email \texttt{Marianne.Johnson@maths.manchester.ac.uk}}
AND PETER FENNER\footnote{School of Mathematics, University of Manchester,
Manchester M13 9PL, UK.\\
Email \texttt{Peter.Fenner@postgrad.manchester.ac.uk}.\\
Peter Fenner's research is supported by an EPSRC Doctoral Training Award.}

\date{\today}
\keywords{}
\thanks{}
\end{center}

\begin{abstract}
We establish a criterion for a semigroup identity to hold in the monoid of  $n \times n$ upper unitriangular matrices with entries in a commutative semiring $S$. This criterion is combinatorial modulo the arithmetic of the multiplicative identity element of $S$. In the case where $S$ is idempotent, the generated variety is the variety $\mathbf{J_{n-1}}$, which by a result of Volkov is generated by any one of: the monoid of unitriangular Boolean matrices, the monoid $R_n$ of all reflexive relations on an $n$ element set, or the Catalan monoid $C_n$. We propose $S$-matrix analogues of these latter two monoids in the case where $S$ is an idempotent semiring whose multiplicative identity element is the `top' element with respect to the natural partial order on $S$, and show that each generates $\mathbf{J_{n-1}}$. As a consequence we obtain a complete solution to the finite basis problem for lossy gossip monoids.
\end{abstract}

\section{Introduction}
\label{sec_intro}
The \emph{finite basis problem} for semigroups asks: which semigroups have an equational theory admitting a finite basis of identities? Such semigroups are called finitely based. In contrast to the situation for finite groups \cite{OP64}, it has long been known that there exist \emph{finite} semigroups which are \emph{non-finitely based} \cite{P69}, and there is a rich literature studying the finite basis problem from viewpoint of finite semigroups (see the survey \cite{V01}). As observed by Volkov \cite{V15}, infinite semigroups are far less frequently studied in the context of the finite basis problem, due to the fact that many natural infinite semigroups are in some sense `too big' to allow for the kind of universal coincidences demanded by identities. For example, if $S$ is a commutative semiring into which the semiring of natural numbers can be embedded, then for $n>1$ the monoid of all $n\times n$ (upper triangular) matrices over $S$ satisfies no non-trivial identities, since the free monoid of rank $2$ embeds into all such semigroups (see \cite{V15} for example). The finite basis problem is increasingly studied for families of infinite semigroups of combinatorial interest for which identities are known to exist, with complete results available for one-relator semigroups \cite{Shn89} and Kauffman monoids \cite{ACHLV15}, and several recent partial results for various semigroups of upper triangular matrices with restrictions on the size of the matrices and the entries permitted on the diagonals \cite{CHL16, CHLS16,V15, ZJL17}.

In this work we consider the identities satisfied by several families of matrix semigroups, beginning with upper triangular matrices with entries in a commutative semiring. In Section \ref{sec_triangular} we show how the analysis of \cite{DJK17} may be generalised to the setting of commutative semirings $S$ to provide necessary and sufficient conditions for a semigroup identity to hold in the monoid of  $n \times n$ upper triangular matrices with entries in $S$. This result is then applied in Section \ref{sec_unitriangular} to establish a criterion for a semigroup identity to hold in the submonoid of  $n \times n$ upper unitriangular matrices, showing that the generated variety depends only upon the isomorphism type of the subsemiring generated by the multipllicative identity element of $S$. In the case where $S$ is an \emph{idempotent semiring} our result together with a result of Volkov \cite{V04} yields that the generated variety is $\mathbf{J_{n-1}}$, that is, the variety of semigroups generated by the monoids of height $n-1$ in Simon's hierarchy of finite $\mathcal{J}$-trivial monoids \cite{Simon}.  In Sections \ref{sec_reflexive} we introduce the submonoid $R_n(S)$ of the full matrix monoid over an \emph{interval semiring} $S$, and show that this generates the same variety as its finite (Boolean) counterpart, the reflexive monoid. In Section \ref{sec_catalan} we consider several monoids related to the Catalan monoid, including the so-called \emph{lossy gossip monoid} $\mathcal{G}_n$ (that is, the monoid generated by all ``metric'' matrices in the full  matrix monoid over the tropical semiring \cite{BDF15}). By \cite{V04} this common variety is once again seen to be $\mathbf{J_{n-1}}$. Since the variety $\mathbf{J_{n-1}}$ is known to be finitely based for $n \leq 4$, and non-finitely based otherwise, this settles the finite basis problem for the above mentioned families of  monoids.

We conclude this introduction by briefly recalling the necessary definitions, notation and background.

\subsection*{Semigroup identities}
\label{subsec_identities}
We write $\mathbb{N}_0$ and $\mathbb{N}$ respectively for the natural numbers with and without $0$. If $\Sigma$ is a finite alphabet, then $\Sigma^+$ will denote the free semigroup on $\Sigma$, that is, the set of
finite, non-empty words over $\Sigma$ under the operation of concatenation. Likewise, $\Sigma^*$ will denote the free monoid on $\Sigma$. Thus $\Sigma^* = \Sigma^+ \cup \{1\}$ where $1$ denotes the empty word. For $w \in \Sigma^+$ and $s \in \Sigma$ we write $|w|$ for
the length of $w$ and $|w|_s$ for the number of occurrences of the letter $s$ in $w$. For $1 \leq i \leq |w|$ we write $w_i$ to denote the $i$th letter of $w$. The \textit{content} of $w$ is the map
$\Sigma \to \mathbb{N}_0, s \mapsto |w|_s$.

Recall that a (\textit{semigroup}) \textit{identity} is a pair of words, usually written ``$u=v$'', in the free semigroup $\Sigma^+$ on an alphabet $\Sigma$. The identity is said to be \emph{balanced} if $|u|_a=|v|_a$ for all $a \in \Sigma$. We say that the identity \textit{holds} in a semigroup $U$ (or that $U$ \textit{satisfies} the identity) if every morphism from $\Sigma^+$ to $U$ maps $u$ and $v$ to the same element of $U$. If a morphism maps $u$ and $v$ to the same element we say that it \textit{satisfies} the given identity in $U$; otherwise it \textit{falsifies} it. We write ${\rm Id}(U)$ to denote the set of all identities satisfied by the semigroup $U$.

\subsection*{Semirings}
\label{subsec_semiring}
Let $S$ be a commutative semiring with additive identity $0_S$ and multiplicative identity $1_S$. We say that $S$ is \emph{idempotent} if $a + a = a$ for all $a \in S$. Examples include the Boolean semiring $\mathbb{B} =\{0,1\}$ in which the only undetermined operation is defined by $1+1=1$, and the tropical semifield $(\mathbb{R}\cup\{-\infty\}, \oplus, \otimes)$, where $a\oplus b = {\rm max}(a,b)$ and $a \otimes b = a+b$, and in which $-\infty$ is the `zero' element, and $0$ is the `one'. There is a natural partial order on every idempotent semiring $S$ given by $a \leq b$ if and only if $a+b=b$; it is clear from definition that $a+b \geq a, b$ for all $a,b \in S$. Thus $0_S$ is the least element of $S$ with respect to this order. Moreover, if $a \leq b$ in $S$, then $cad \leq cbd$ and $a+c \leq b+c$ for all $c,d \in S$.

We say that a commutative semiring $S$ is an \emph{interval semiring} if $S$ is idempotent, and $1_S$ is the greatest element of $S$ with respect to the natural partial order on $S$.  Examples of interval semirings include: the Boolean semiring $\mathbb{B}$; the semiring $\mathcal{I} =([0,1],\cdot, \oplus)$ with usual multiplication of numbers and addition given by taking the maximum;  the semiring $(\mathbb{R}_{\leq 0} \cup \{-\infty\}, \otimes, \oplus)$ with multiplication given by usual addition of numbers and addition given by taking the maximum; any distributive lattice $L$ with addition $\vee$ and multiplication $\wedge$.

\subsection*{Matrix semigroups}
 It is easy to see that the set of all $n \times n$ matrices with entries in $S$ forms a monoid under the matrix multiplication induced from the operations in $S$. We denote this semigroup by $M_n(S)$ and write $UT_n(S)$ to denote the subsemigroup of $M_n(S)$ consisting of the upper-triangular matrices in $M_n(S)$ whose entries below the main diagonal are zero. We also write $U_n(S)$ to denote the semigroup of unitriangular matrices, namely those elements of $UT_n(S)$ whose diagonal entries are all equal to $1_S$.

In the case where $S$ is an idempotent semiring we define a partial order $\preceq$ on $M_n(S)$ by $A \preceq B$ if and only if $A_{i,j} \leq B_{i,j}$ for all $i$ and $j$. It is easy to see that matrix multiplication respects the partial order $\preceq$ (i.e. $M_n(S)$ is an ordered monoid). Indeed, for $A, B,C \in M_n(S)$ with $A \preceq B$, for all $i,j$ we have
$$(CA)_{i,j} = \sum_{k=1}^n C_{i,k}A_{k,j} \leq \sum_{k=1}^n C_{i,k}B_{k,j} = (CB)_{i,j}.$$

\subsection*{Polynomials}
\label{subsec_poly}
By a \textit{formal polynomial} in variables from a set $X$ we mean an element of the commutative polynomial semiring $S[X]$, that is, a finite formal sum in which each term is a formal product of a non-zero coefficient from $S$ and formal powers of finitely many of the variables of $X$, considered up to the commutative and distributive laws in $S$. If $S$ is idempotent, we consider the summation up to idempotency of addition.

Each formal polynomial naturally defines a function from $S^X$ to $S$, by interpreting all formal products and formal sums as products and sums within $S$. Two distinct formal polynomials may define the same function. For example, $x^{\otimes 2} \oplus x \oplus 1$ and $x^{\otimes 2} \oplus 1$ are distinct formal tropical polynomials defining the same function, since $x$ can never exceed both $x^{\otimes 2}$ and $1$. We say that two formal polynomials are \textit{functionally equivalent} over $S$ if they represent the same function from $S^X$ to $S$.

\section{The identities of triangular matrices}
\label{sec_triangular}
We begin by providing a generalisation of \cite[Theorem 5.1]{DJK17} to the setting of commutative semirings.

Let $[n] = \{1, 2,\ldots, n\}$. By a \textit{$k$-vertex walk} (or \textit{walk of vertex length} $k$) in $[n]$ we mean a $k$-tuple $(v_1, \dots, v_k)$ such that $v_1 \leq v_2 \leq \dots \leq v_k$. A \textit{$k$-vertex path} (or \textit{path of vertex length $k$}) is a $k$-vertex walk in which consecutive vertices (and hence all vertices) are distinct.

Let $w$ be a word over the alphabet $\Sigma$. For $0 \leq p < q \leq |w|+1$ and $s \in \Sigma$ we
define
$$\beta_s^w(p, q) \ = \ | \lbrace i \in \mathbb{N} \mid p < i < q, w_i = s \rbrace |$$
to be the number of occurrences of $s$ lying strictly \emph{between} $w_{p}$ and $w_{q}$. For each $u \in \Sigma^*$ with $|u| \leq n-1$ and each $(|u|+1)$-vertex path $\rho=(\rho_0, \rho_1, \ldots, \rho_{|u|})$ in $[n]$, we define a formal polynomial (over an arbitrary, but fixed, commutative semiring $S$) having variables $x(s,i)$ for each letter $s \in \Sigma$ and vertex $i \in [n]$ as follows:
\begin{eqnarray*}
f_{u, \rho}^{w} = \sum \prod_{s\in \Sigma}\prod_{k=0}^{|u|} x(s, \rho_k)^{\beta_s^w(\alpha_k, \alpha_{k+1})},
\end{eqnarray*}
where the sum ranges over all $0=\alpha_0< \alpha_1<\cdots<\alpha_{|u|}<\alpha_{|u|+1}=|w|+1$ such that $w_{\alpha_k}=u_k$ for $k=1, \ldots, |u|$. Thus it is easy to see that $f_{u, \rho}^{w} \neq 0_S$ if and only if $u$ is a scattered subword of $w$ of length equal to the path $\rho$. Note that taking $u$ to be the empty word forces $\rho = (\rho_0)$ for some $\rho_0 \in [n]$ and hence $f_{u, \rho}^w = \prod_{s \in \Sigma} x(s, \rho_0)^{|w|_s}$ is a monomial completely determined by the content of $w$.

\begin{lemma}\label{entries}
Let $S$ be a commutative semiring, and let $\phi : \Sigma^+ \to UT_n(S)$ be a morphism. Define $\underline{x} \in S^{\Sigma \times [n]}$ by
$$\underline{x}(s,i) \ = \ \phi(s)_{i,i}.$$
Then for any word $w \in \Sigma^+$ and vertices $i, j \in [n]$ we have
\begin{equation}\label{eq1}
\phi(w)_{i,j} \ = \ \sum_{\substack{u \in \Sigma^*,\\ |u| \leq n-1}}\sum_{\rho \in [n]_{i,j}^{|u|}} \left(\prod_{k=1}^{|u|} \phi(u_k)_{\rho_{k-1}, \rho_k}\right) \cdot f_{u, \rho}^w(\underline{x}),
\end{equation}
where $[n]^{|u|}_{i,j}$ denotes the set of all $(|u|+1)$-vertex paths from $i$ to $j$ in $[n]$.
\end{lemma}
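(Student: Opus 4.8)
The plan is to prove the formula by expanding the matrix product $\phi(w) = \phi(w_1)\phi(w_2)\cdots\phi(w_{|w|})$ entry-by-entry and then reorganising the resulting sum according to which diagonal entries versus strictly-above-diagonal entries get used along the way. Since each $\phi(w_i)$ lies in $UT_n(S)$, its entries below the diagonal vanish, so the standard matrix-product expansion
$$\phi(w)_{i,j} = \sum_{j_0=i,\, j_{|w|}=j} \prod_{t=1}^{|w|} \phi(w_t)_{j_{t-1}, j_t}$$
ranges only over weakly increasing sequences $i = j_0 \leq j_1 \leq \cdots \leq j_{|w|} = j$, i.e.\ over walks from $i$ to $j$. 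The key observation is that each such walk visits some set of distinct vertices, and the factor $\phi(w_t)_{j_{t-1},j_t}$ is a diagonal entry $\phi(w_t)_{j_{t-1},j_{t-1}} = \underline{x}(w_t, j_{t-1})$ exactly when $j_{t-1}=j_t$ (the walk stays put), and a genuine strictly-upper entry when $j_{t-1} < j_t$ (the walk moves up).

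First I would record the reduction of the product over all positions $t$ into two kinds of factors according to whether the walk moves at step $t$. The sequence of distinct vertices visited by a walk $(j_0,\ldots,j_{|w|})$ forms a path $\rho = (\rho_0,\ldots,\rho_{|u|})$ of some vertex length $|u|+1 \leq n$ (since a path in $[n]$ has at most $n$ distinct vertices, forcing $|u|\leq n-1$), and the positions $t$ at which the walk moves up — say $t=\alpha_1 < \cdots < \alpha_{|u|}$ — carry the off-diagonal factors $\phi(w_{\alpha_k})_{\rho_{k-1},\rho_k}$. Setting $u = w_{\alpha_1}\cdots w_{\alpha_{|u|}}$, this word $u$ is precisely the scattered subword of $w$ read off at the ``jump'' positions, and the condition $w_{\alpha_k} = u_k$ matches the definition of $f_{u,\rho}^w$. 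Thus I would show that a walk from $i$ to $j$ is the same data as a choice of path $\rho \in [n]_{i,j}^{|u|}$ together with a choice of jump positions $0 = \alpha_0 < \alpha_1 < \cdots < \alpha_{|u|} < \alpha_{|u|+1} = |w|+1$ satisfying $w_{\alpha_k}=u_k$.

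Next I would collect the diagonal factors: between consecutive jump positions $\alpha_k$ and $\alpha_{k+1}$ the walk rests at vertex $\rho_k$ for each position $t$ with $\alpha_k < t < \alpha_{k+1}$, contributing the diagonal entry $\underline{x}(w_t,\rho_k)$; gathering these over all such $t$ and grouping by letter $s \in \Sigma$ yields exactly $\prod_{s\in\Sigma} \underline{x}(s,\rho_k)^{\beta_s^w(\alpha_k,\alpha_{k+1})}$, since $\beta_s^w(\alpha_k,\alpha_{k+1})$ counts the occurrences of $s$ strictly between positions $\alpha_k$ and $\alpha_{k+1}$. (I must treat the jump positions themselves correctly: the factor at $t=\alpha_k$ is the off-diagonal entry, not a diagonal one, which is why the exponents count occurrences \emph{strictly} between the $\alpha$'s.) Taking the product over $k$ and summing over all admissible $(\alpha_k)$ produces precisely $f_{u,\rho}^w(\underline{x})$, while the off-diagonal factors assemble into $\prod_{k=1}^{|u|}\phi(u_k)_{\rho_{k-1},\rho_k}$, which is independent of the choice of the $\alpha_k$ and so factors out of the inner sum. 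Summing over all paths $\rho$ and all admissible words $u$ then gives the claimed formula.

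The main obstacle I expect is purely bookkeeping: setting up a clean bijection between, on one side, weakly increasing position-sequences $(j_0,\ldots,j_{|w|})$ indexing the raw matrix-product expansion, and on the other side, triples consisting of a word $u$, a path $\rho$, and an increasing tuple $(\alpha_1,\ldots,\alpha_{|u|})$ of jump positions with the letter-matching constraint. Care is needed at the boundary conventions $\alpha_0 = 0$ and $\alpha_{|u|+1} = |w|+1$, and in the degenerate case $u = 1$ (the empty word), where the walk never moves, $\rho = (\rho_0)$ is a single vertex with $i=j=\rho_0$, the empty product of off-diagonal entries is $1_S$, and $f_{u,\rho}^w(\underline{x}) = \prod_{s\in\Sigma}\underline{x}(s,\rho_0)^{|w|_s}$ recovers the diagonal entry $\phi(w)_{\rho_0,\rho_0}$; verifying that the formula specialises correctly here is a useful consistency check. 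No deep idea is required beyond distributivity and commutativity in $S$ (used to regroup the diagonal factors by letter), so the proof is essentially a careful unwinding of the definitions.
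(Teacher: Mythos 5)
Your proof is correct and is essentially the paper's argument run in the opposite direction: the paper expands the right-hand side and recognises the matrix-product formula, while you expand $\phi(w)_{i,j}$ over weakly increasing walks and regroup, but both rest on exactly the same bijection between walks and triples $(u,\rho,\alpha)$ of scattered subword, underlying path, and jump positions, with diagonal entries contributing the $\beta_s^w$-exponents and off-diagonal entries the factors $\phi(u_k)_{\rho_{k-1},\rho_k}$. Nothing further is needed.
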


\begin{proof}

We follow the proof given in \cite{DJK17}.

Let $i$ and $j$ be vertices. Using the definition of the functions $f_{u,\rho}^w$, the value given to $\underline{x}$ and the distributivity of multiplication over addition, the right-hand-side of \eqref{eq1} is equal to
\begin{align*}
\sum_{\substack{u \in \Sigma^*,\\ |u| \leq n-1}} \sum_{\alpha \in \mathcal{A}^w_{u}}
\sum_{\rho \in [n]_{i,j}^{|u|}} \left(\prod_{k=1}^{|u|} \phi(u_k)_{\rho_{k-1}, \rho_k}\right) \cdot
\left( \prod_{s\in \Sigma}\prod_{k=0}^{|u|} (\phi(s)_{\rho_k, \rho_k})^{\beta_s^w(\alpha_k, \alpha_{k+1})} \right)
\end{align*}
where
$$\mathcal{A}^w_{u} =\{(\alpha_0, \ldots, \alpha_{|u|+1}): 0=\alpha_0< \alpha_1<\cdots<\alpha_{|u|}<\alpha_{|u|+1}=|w|+1 \mbox{ with } w_{\alpha_k}=u_k\}.$$
Notice that we are summing over all possible words $u$ of length less than $n$, and then over all scattered subwords of $w$ equal to $u$. Thus, we are simply summing over all scattered subwords of $w$ of length less than $n$, so the above is equal to:
\begin{align*}
\sum_{l=0}^{n-1}\sum_{\alpha \in \mathcal{A}_{l}}\sum_{\rho \in [n]_{i,j}^l} \left(\prod_{k=1}^{l} \phi(w_{\alpha_k})_{\rho_{k-1}, \rho_k}\right) \cdot
\left( \prod_{s\in \Sigma}\cdot \prod_{k=0}^{l} (\phi(s)_{\rho_k, \rho_k})^{\beta_s^w(\alpha_k, \alpha_{k+1})} \right)
\end{align*}
where $\mathcal{A}_{l} =\{(\alpha_0, \ldots, \alpha_{l+1}): 0=\alpha_0< \alpha_1<\cdots<\alpha_{l}<\alpha_{l+1}=|w|+1\}$.

Now to each term in the above sum, defined by a choice of $\alpha_i$'s and a $\rho \in [n]_{i,j}^l$, we can associate
a $(|w|+1)$-vertex walk $(\sigma_0 = i, \dots, \sigma_{|w|} = j)$ in $[n]$ whose underlying path is $\rho$ and which transitions to vertex $\rho_k$ after $\alpha_{k}$ steps. Clearly every $(|w|+1)$-vertex walk from $i$ to $j$ arises exactly once in this way, and so we are summing over all such walks.
In each term, the first bracket gives a factor $\phi(w_q)_{\sigma_{q-1},\sigma_{q}}$ when $q = \alpha_k$ for some $k$,
while from the definition of the functions $\beta_s^w$, the second bracket gives a factor $\phi(w_q)_{\sigma_{q-1},\sigma_q}$ for each $q$ not of this form. Thus, the above is simply equal to:
$$\sum \prod_{q=1}^{|w|} \phi(w_q)_{\sigma_{q-1},\sigma_q}$$
where the sum is taken over all $(|w|+1)$-vertex walks $(i = \sigma_0, \sigma_1, \dots, \sigma_{|w|} = j)$ in $[n]$. But by the
definition of multiplication in $UT_n(S)$, this is easily seen to be equal to $\left( \phi(w_1)\cdots \phi(w_{|w|}) \right)_{i,j} \ = \ \phi(w)_{i,j}.$
\end{proof}

Let  $f_u^w$ denote the polynomial $f_{u, \rho}^w$ with $\rho=(1,2,\ldots, |u|+1)$ in variables $x(s,i)$, with $s \in \Sigma$ and $1 \leq i \leq |u|+1$. We are now ready to prove the main theorem of this section.

\begin{theorem}
\label{thm_kletterGamma}
Let $S$ be a commutative semiring. The identity $w=v$ over alphabet $\Sigma$ is satisfied in $UT_n(S)$ if and only if for every $u \in \Sigma^*$ with $|u| \leq n-1$ the polynomials $f_{u}^w$ and $f_{u}^v$ are functionally equivalent over $S$.
\end{theorem}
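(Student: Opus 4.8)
The plan is to deduce the statement directly from Lemma \ref{entries}, which expresses every entry $\phi(w)_{i,j}$ as a sum over pairs $(u,\rho)$, with $u$ ranging over words of length at most $n-1$ and $\rho$ over $(|u|+1)$-vertex paths from $i$ to $j$, of a \emph{coefficient} $\prod_{k=1}^{|u|}\phi(u_k)_{\rho_{k-1},\rho_k}$ (depending only on $\phi$, $u$ and $\rho$) times the evaluation $f_{u,\rho}^w(\underline{x})$. The crucial structural observation is that this coefficient is entirely independent of the word $w$, so that testing $\phi(w)_{i,j}=\phi(v)_{i,j}$ reduces to comparing the polynomials $f_{u,\rho}^w$ and $f_{u,\rho}^v$. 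Since $f_{u,\rho}^w$ is obtained from $f_u^w$ by the injective relabelling of variables $x(s,k+1)\mapsto x(s,\rho_k)$ determined by the strictly increasing path $\rho$ (and indeed $f_u^w=f_{u,\rho}^w$ for $\rho=(1,\ldots,|u|+1)$), functional equivalence of $f_u^w$ and $f_u^v$ is equivalent to functional equivalence of $f_{u,\rho}^w$ and $f_{u,\rho}^v$ for every path $\rho$.

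For the sufficiency direction I would argue as follows. Assume $f_u^w$ and $f_u^v$ are functionally equivalent for all $u$ with $|u|\leq n-1$, and let $\phi\colon\Sigma^+\to UT_n(S)$ be an arbitrary morphism, giving rise to $\underline{x}$ as in Lemma \ref{entries}. By the remark above, $f_{u,\rho}^w(\underline{x})=f_{u,\rho}^v(\underline{x})$ for every $u$ and every $\rho$, and since the coefficients do not depend on $w$ or $v$, the two expansions of $\phi(w)_{i,j}$ and $\phi(v)_{i,j}$ agree term by term. Hence $\phi(w)=\phi(v)$; as $\phi$ was arbitrary, the identity $w=v$ holds in $UT_n(S)$.

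The main work lies in the necessity direction, where one must construct, for each fixed $u$ with $m:=|u|\leq n-1$ and each assignment of values $\underline{x}(s,i)\in S$ ($s\in\Sigma$, $1\le i\le m+1$), a single morphism $\phi$ for which Lemma \ref{entries} isolates exactly the term $f_u^w$. I would define $\phi$ relative to the distinguished path $\rho=(1,2,\ldots,m+1)$ by setting the diagonal entries $\phi(s)_{i,i}=\underline{x}(s,i)$ and the superdiagonal entries $\phi(s)_{i,i+1}=1_S$ when $s=u_i$ and $0_S$ otherwise, with all remaining strictly-upper entries equal to $0_S$. The point is that any path from $1$ to $m+1$ is strictly increasing and so stays within $\{1,\ldots,m+1\}$; moreover a pair $(u',\rho')$ with nonzero coefficient can only use superdiagonal edges, forcing $\rho'=\rho$ and then forcing $u'_k=u_k$ for all $k$, whence $u'=u$ and the coefficient is $1_S$. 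Consequently $\phi(w)_{1,m+1}=f_{u,\rho}^w(\underline{x})=f_u^w(\underline{x})$, and likewise for $v$; the hypothesis $\phi(w)=\phi(v)$ then yields $f_u^w(\underline{x})=f_u^v(\underline{x})$ for every choice of $\underline{x}$, which is exactly the required functional equivalence.

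I expect the delicate point to be verifying this selectivity of $\phi$: one must check that a strictly increasing path from $1$ to $m+1$ all of whose edges are superdiagonal must be $(1,\ldots,m+1)$, and that the matching constraint then pins down $u'=u$, so that no spurious pair survives in the expansion. This argument also quietly assumes $1_S\neq 0_S$; the degenerate case $1_S=0_S$, in which $S$ and hence $UT_n(S)$ is trivial, should be disposed of separately, since then every identity holds and all formal polynomials are functionally equivalent.
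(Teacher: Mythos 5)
Your proposal is correct and takes essentially the same approach as the paper's proof: sufficiency via the observation that $f_{u,\rho}^w$ is just a relabelling of the variables of $f_u^w$, applied term by term in the expansion of Lemma \ref{entries}, and necessity via exactly the same falsifying morphism (diagonal entries taken from $\underline{x}$, superdiagonal entries $1_S$ along $u$, zeros elsewhere). The ``selectivity'' verification you flag as delicate is precisely the step the paper leaves implicit when it asserts $\phi(w)_{i,j}=f_u^w(\underline{x})$, and your separate treatment of the degenerate case $1_S=0_S$ is harmless but not needed, since in that case no falsifying assignment exists and the relevant direction holds vacuously.
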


\begin{proof}
Suppose first that $f_{u}^w(\underline{x})\neq f_{u}^v(\underline{x})$ for some word $u \in \Sigma^{+}$ of length at most $n-1$ and $\underline{x} \in S^{\Sigma \times [n]}$. Define a morphism $\phi : \Sigma^+ \to UT_n(S)$ by
$$\phi(s)_{p,p} \ = \ \underline{x}(s,p) \in S, \mbox{ for all } p \in [n] \mbox{ and } s \in \Sigma; \mbox{ and}$$
$$\phi(s)_{p,q} \ = \ \begin{cases}
1_S & \mbox{if } s=u_i, p=i, q=i+1\\
0_S & \mbox{otherwise}.
\end{cases}$$
Then by Lemma~\ref{entries},
$$\phi(v)_{i,j} \ = \ f_{u}^v(\ul{x}) \ \neq \ f_{u}^w(\ul{x}) \ = \ \phi(w)_{i,j},$$
and so the morphism $\phi$ falsifies the identity in $UT_n(S)$.

Conversely, suppose that $f_{u}^w$ and $f_{u}^v$ are functionally equivalent over $S$ for all $u\in \Sigma^*$ of length at most $n-1$. Noting that for any path $\rho$ of vertex length $|u|$, the polynomials $f_{u, \rho}^w$ and $f_{u}^w$ differ only in the labelling of their variables, it is then easy to see that $f_{u, \rho}^w$ and $f_{u, \rho}^v$ are functionally equivalent for all pairs $u, \rho$ with $u\in \Sigma^*$ of length at most $n-1$, and $\rho$ a path of vertex length $|u|+1$ through $[n]$.

It suffices to show that the identity $w=v$ is satisfied by every morphism $\phi : \Sigma^+ \to UT_n(S)$, so let $\phi$ be such a morphism and define $\underline{x} \in S^{\Sigma \times [n]}$ by $\underline{x}(s,i) = \phi(s)_{i,i}$. Since $\phi$ is a morphism to $UT_n(S)$, we know that $\phi(w)_{i,j}=0_S = \phi(v)_{i,j}$ whenever $i > j$. On the other hand, if $i \leq j$ then Lemma~\ref{entries} gives
$$\phi(w)_{i,j} \ = \ \sum_{\substack{u \in \Sigma^*,\\ |u| \leq n-1}} \sum_{\rho \in [n]_{i,j}^{|u|}} \left(\prod_{k=1}^{|u|} \phi(u_k)_{\rho_{k-1}, \rho_k}\right) \cdot f_{u, \rho}^w(\underline{x}) \ = \ \phi(v)_{i,j}.$$
\end{proof}

\begin{lemma}
\label{count}
Let $S$ be a semiring whose multiplicative monoid contains an element $\alpha$ generating a free submonoid of rank 1, and let $w, v \in \Sigma^+$.
\begin{itemize}
\item[(i)] The polynomials $f_1^w$ and $f_1^v$ are functionally equivalent if and only if $w$ and $v$ have the same content.
\item[(ii)]  Suppose further that the partial sums $\sum_{i=0}^{j}\alpha^i$  for $j \in \mathbb{N}_0$ are pairwise distinct. If  $f_a^w$ is functionally equivalent to $f_a^v$ for all $a \in \Sigma$, then $f_1^w$ is functionally equivalent to $f_1^v$ .
\end{itemize}
\end{lemma}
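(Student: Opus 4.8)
The plan is to prove both parts by evaluating the relevant polynomials at carefully chosen $\underline{x} \in S^{\Sigma \times [n]}$ so that the content of $w$ and $v$ can be read off as distinguishable elements of $S$. For part (i), recall that $f_1^w = \prod_{s \in \Sigma} x(s,1)^{|w|_s}$ is a single monomial in the variables $x(s,1)$. If $w$ and $v$ have the same content then $f_1^w$ and $f_1^v$ are literally the same monomial, hence functionally equivalent; this is the easy direction. For the converse I would isolate one letter at a time: fixing $s \in \Sigma$, set $x(s,1) = \alpha$ and $x(t,1) = 1_S$ for every $t \neq s$. Under this substitution $f_1^w$ evaluates to $\alpha^{|w|_s}$ and $f_1^v$ to $\alpha^{|v|_s}$, so functional equivalence forces $\alpha^{|w|_s} = \alpha^{|v|_s}$. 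Since $\alpha$ generates a free submonoid of rank $1$ its powers are pairwise distinct, giving $|w|_s = |v|_s$; as $s$ is arbitrary, $w$ and $v$ have the same content. Note that this direction uses only the rank-$1$ hypothesis, consistent with part (i) not invoking the partial-sum condition.

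For part (ii) the starting point is the explicit shape of the single-letter polynomial. Taking $u = a$ and $\rho = (1,2)$, we have
$$f_a^w \ = \ \sum_{p\,:\,w_p=a}\ \prod_{s\in\Sigma} x(s,1)^{\beta_s^w(0,p)}\, x(s,2)^{\beta_s^w(p,|w|+1)},$$
a sum with exactly one term for each occurrence of $a$ in $w$. The key idea is to use an \emph{asymmetric} substitution that spreads these terms across distinct powers of $\alpha$: set $x(a,1) = \alpha$, $x(a,2) = 1_S$, and $x(s,i) = 1_S$ for all $s \neq a$ and $i \in \{1,2\}$. Writing $p_1 < \cdots < p_m$ for the positions of $a$ in $w$, so that $m = |w|_a$, the term at $p_\ell$ collapses to $\alpha^{\beta_a^w(0,p_\ell)} = \alpha^{\ell-1}$, since exactly $\ell-1$ copies of $a$ precede $p_\ell$ and every remaining factor is a power of $1_S$. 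Hence $f_a^w$ evaluates to the partial sum $\sum_{i=0}^{m-1}\alpha^i$, and likewise $f_a^v$ to $\sum_{i=0}^{|v|_a-1}\alpha^i$.

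Functional equivalence of $f_a^w$ and $f_a^v$ therefore forces these two partial sums to agree, and the hypothesis that the partial sums $\sum_{i=0}^{j}\alpha^i$ are pairwise distinct yields $|w|_a = |v|_a$. The one point needing care is the degenerate case $|w|_a = 0$, where $f_a^w$ is the empty sum $0_S$: here I would observe that the recurrence $\sum_{i=0}^{j+1}\alpha^i = 1_S + \alpha\sum_{i=0}^{j}\alpha^i$ shows that if any partial sum equalled $0_S$ then $\sum_{i=0}^{j+1}\alpha^i = 1_S = \sum_{i=0}^{0}\alpha^i$, contradicting pairwise distinctness; thus every partial sum is distinct from $0_S$ as well, and $|w|_a = 0$ indeed forces $|v|_a = 0$. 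Running this argument for every $a \in \Sigma$ shows that $w$ and $v$ have the same content, whereupon part (i) delivers the functional equivalence of $f_1^w$ and $f_1^v$.

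I expect the main obstacle to be the choice of substitution rather than any delicate estimate. The naive symmetric choice $x(a,1) = x(a,2) = \alpha$ makes every term equal to $\alpha^{m-1}$, producing only the $m$-fold sum $m \cdot \alpha^{m-1}$, from which $m$ cannot in general be recovered in an arbitrary semiring. The asymmetric substitution is precisely what turns $f_a^w$ into a genuine partial sum $\sum_{i=0}^{m-1}\alpha^i$, matching the pairwise-distinctness hypothesis exactly; identifying this substitution, and verifying the small recurrence that rules out $0_S$ as a partial sum, is the crux of the argument.
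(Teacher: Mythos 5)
Your proof is correct and takes essentially the same route as the paper: part (i) uses the identical substitution $x(s,1)=\alpha$, $x(t,1)=1_S$ for $t\neq s$, and part (ii) uses exactly the paper's evaluation ($x(a,1)=\alpha$, all other variables set to $1_S$), which turns $f_a^w$ into the partial sum $\sum_{i=0}^{|w|_a-1}\alpha^i$. Your additional treatment of the degenerate case $|w|_a=0$ (showing via the recurrence $\sum_{i=0}^{j+1}\alpha^i = 1_S+\alpha\sum_{i=0}^{j}\alpha^i$ that $0_S$ cannot equal any nonempty partial sum) is a point the paper's proof passes over silently, so it is a welcome refinement rather than a deviation.
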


\begin{proof}
(i) By definition, $f_1^w$ is the monomial $\prod_{s \in \Sigma} x(s, 1)^{|w|_s}$, and it is clear that $w$ and $v$ have the same content if and only if the formal polynomials $f_1^w$ and $f_1^v$ are \emph{identical}. In particular, if the content of the two words agree, then these polynomials are functionally equivalent. Suppose then that $f_1^w$ and $f_1^v$ are functionally equivalent.  Setting $x(s,1)=\alpha$ and $x(t,1)=1_S$ for all $t \neq s$ then yields $\alpha^{|w|_s} = \alpha^{|v|_s}$, and hence $|w|_s=|v|_s$. Repeating this argument for each $s \in \Sigma$ yields that the two words have the same content.

(ii) It suffices to show that if  $f_a^w$ is functionally equivalent to $f_a^v$ for all $a \in \Sigma$, then the content of
the two words must be equal. Evaluating the polynomials $f_a^w$ and $f_a^v$ at  
$x(a, 1) = \alpha$ and $x(z, i) = 1_S$ for all other choices of $z, i$ yields $\sum_{i=0}^{|w|_a-1}\alpha^i$ = $\sum_{i=0}^{|v|_a-1}\alpha^i$, and hence $|w|_a=|v|_a$. Repeating this argument for each $a \in \Sigma$ gives that the two words have the same content.
\end{proof}

Under the hypothesis of the previous lemma, we note that identities must be balanced.
\begin{corollary}
\label{words}
Let $S$ be a semiring whose multiplicative monoid contains a free submonoid of rank $1$. Any non-trivial semigroup identity satisfied by $UT_n(S)$ or $M_n(S)$ must be balanced.
\end{corollary}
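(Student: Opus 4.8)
The plan is to reduce the two cases to a single one and then read off the content of the words from the empty-word instance of the criterion in Theorem~\ref{thm_kletterGamma}. Since $UT_n(S)$ is by definition a subsemigroup of $M_n(S)$, every identity satisfied by $M_n(S)$ is automatically satisfied by $UT_n(S)$; hence it suffices to treat the case of an identity $w=v$ satisfied by $UT_n(S)$, and the statement for $M_n(S)$ follows for free.

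For this case I would apply Theorem~\ref{thm_kletterGamma} with $u$ taken to be the empty word $1$, which is a legitimate choice since $|1| = 0 \leq n-1$. The theorem then guarantees that the polynomials $f_1^w$ and $f_1^v$ are functionally equivalent over $S$. By the remark preceding the theorem, $f_1^w = \prod_{s \in \Sigma} x(s,1)^{|w|_s}$ is the monomial determined solely by the content of $w$ (and similarly for $v$), so this single instance of the criterion already isolates exactly the data we need.

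The final step is to invoke Lemma~\ref{count}(i): under the standing hypothesis that the multiplicative monoid of $S$ contains an element $\alpha$ generating a free submonoid of rank $1$, functional equivalence of $f_1^w$ and $f_1^v$ forces $w$ and $v$ to have the same content, that is, $|w|_a = |v|_a$ for every $a \in \Sigma$. This is precisely the assertion that the identity $w=v$ is balanced, which completes the argument (trivial identities being balanced automatically, so the non-triviality hypothesis plays no active role).

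Since the genuine work has already been carried out in Theorem~\ref{thm_kletterGamma} and Lemma~\ref{count}, I do not anticipate a serious obstacle: the corollary is essentially a specialisation of the criterion to the empty scattered subword. The only points requiring any care are the observation that the inclusion $UT_n(S) \subseteq M_n(S)$ lets one dispense with the $M_n(S)$ case separately, and the verification that the empty word is an admissible value of $u$ (so that the content-detecting monomial $f_1^w$ is actually available). The role of the free-rank-one hypothesis is confined to Lemma~\ref{count}(i), where the distinctness of the powers $\alpha^{|w|_a}$ and $\alpha^{|v|_a}$ is what upgrades functional equivalence to equality of exponents.
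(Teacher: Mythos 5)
Your proposal is correct and is essentially the argument the paper intends: the corollary is stated there without proof as an immediate consequence of Theorem~\ref{thm_kletterGamma} (applied with $u$ the empty word, which is admissible since the theorem quantifies over $u \in \Sigma^*$) together with Lemma~\ref{count}(i), and your reduction of the $M_n(S)$ case to the $UT_n(S)$ case via the inclusion $UT_n(S) \subseteq M_n(S)$ is the natural way to dispose of the second monoid.
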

Under the stronger hypothesese of the Lemma \ref{count} (ii), we may reduce the number of polynomials to be checked by $1$.
\begin{corollary}
\label{words}
Let $S$ be a semiring whose multiplicative monoid contains an element $\alpha$ generating a free submonoid of rank 1, and suppose that the partial sums $\sum_{i=1}^{j}\alpha^i$  for $j \in \mathbb{N}$ are pairwise distinct. The identity $w=v$ over alphabet $\Sigma$ is satisfied in $UT_n(S)$ if and only if for every $u \in \Sigma^+$ with $1 \leq |u| \leq n-1$ the polynomials $f_{u}^w$ and $f_{u}^v$ are functionally equivalent.
\end{corollary}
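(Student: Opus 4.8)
The plan is to deduce this directly from Theorem~\ref{thm_kletterGamma}. Its criterion differs from the one asserted here in exactly one respect: Theorem~\ref{thm_kletterGamma} also demands functional equivalence of $f_1^w$ and $f_1^v$ for the \emph{empty} word $u=1$, whereas the present statement only ranges over nonempty $u$. Since $f_1^w = \prod_{s\in\Sigma} x(s,1)^{|w|_s}$ records precisely the content of $w$, the entire content of the proof is to show that the single-letter equivalences already force this empty-word equivalence; everything else is a matter of bookkeeping. So I would organise the argument as two applications of Theorem~\ref{thm_kletterGamma} bridged by Lemma~\ref{count}(ii).

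First I would dispatch the forward implication. If $w=v$ is satisfied in $UT_n(S)$, then Theorem~\ref{thm_kletterGamma} yields that $f_u^w$ and $f_u^v$ are functionally equivalent for \emph{every} $u\in\Sigma^*$ of length at most $n-1$, and in particular for every nonempty such $u$; this is exactly the condition in the statement. This direction requires no hypothesis on $\alpha$ at all.

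For the converse, assume $f_u^w$ and $f_u^v$ are functionally equivalent for all nonempty $u$ with $1 \le |u| \le n-1$. Because each single letter $a\in\Sigma$ is a word of length $1 \le n-1$ (which silently presupposes $n \ge 2$; for $n=1$ the empty word cannot be recovered and the reduction genuinely fails), this hypothesis includes functional equivalence of $f_a^w$ and $f_a^v$ for every $a\in\Sigma$. The element $\alpha$ together with the distinctness of the relevant partial sums are precisely the hypotheses of Lemma~\ref{count}(ii), so that lemma applies and yields functional equivalence of $f_1^w$ and $f_1^v$. Combining this with the assumed equivalences for all nonempty $u$ of length at most $n-1$, we now have functional equivalence of $f_u^w$ and $f_u^v$ for \emph{all} $u\in\Sigma^*$ of length at most $n-1$, and a second application of Theorem~\ref{thm_kletterGamma} gives that $w=v$ is satisfied in $UT_n(S)$.

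There is no substantive obstacle here: the statement is a repackaging of Theorem~\ref{thm_kletterGamma} in which Lemma~\ref{count}(ii) is used to eliminate the empty word from the list of polynomials to be checked. The one point genuinely requiring care is confirming that the partial-sum distinctness hypothesis stated here supplies the injectivity of $m \mapsto \sum_{i=0}^{m-1}\alpha^i$ that the proof of Lemma~\ref{count}(ii) actually invokes; since $\sum_{i=0}^{j}\alpha^i = 1_S + \sum_{i=1}^{j}\alpha^i$, one should verify that the two distinctness conditions match for the purpose at hand (or, failing that, re-run the evaluation argument of Lemma~\ref{count}(ii) with the variable set so as to produce the partial sums in the form used here). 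Beyond this alignment of hypotheses, the corollary is immediate.
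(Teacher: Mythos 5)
Your proposal is correct and is essentially the paper's own argument: the corollary is stated there without proof, precisely as the combination of Theorem~\ref{thm_kletterGamma} with Lemma~\ref{count}(ii) that you describe (with the forward direction being a trivial restriction of the theorem's criterion). The one point you leave open --- reconciling the corollary's hypothesis that the sums $\sum_{i=1}^{j}\alpha^i$ ($j \in \mathbb{N}$) are pairwise distinct with the lemma's hypothesis that the sums $\sum_{i=0}^{j}\alpha^i$ ($j \in \mathbb{N}_0$) are pairwise distinct --- is not settled by writing $\sum_{i=0}^{j}\alpha^i = 1_S + \sum_{i=1}^{j}\alpha^i$ (semirings need not be additively cancellative), but it is settled by multiplying by $\alpha$: if $\sum_{i=0}^{j}\alpha^i = \sum_{i=0}^{k}\alpha^i$ with $j \neq k$, then multiplying through by $\alpha$ and using distributivity gives $\sum_{i=1}^{j+1}\alpha^i = \sum_{i=1}^{k+1}\alpha^i$ with $j+1 \neq k+1$, contradicting the stated hypothesis, so the corollary's hypothesis does imply the lemma's and your appeal to Lemma~\ref{count}(ii) is legitimate.
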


\section{The identities of unitriangular matrices}
\label{sec_unitriangular}
Say that the \emph{scattered multiplicity} of $u\in \Sigma^+$ in $w \in \Sigma^+$ is the number of distinct ways in which $u$ occurs as a scattered subword of $w$, and denote this by $m_u^w \in \mathbb{N}_0$.
For $m \in \mathbb{N}_0$ write $\lfloor m \rfloor_S:=\sum_{j=1}^{m} 1_S$.
\begin{theorem}
\label{modS}
Let $S$ be a commutative semiring. The identity $w=v$ over alphabet $\Sigma$ is satisfied in $U_n(S)$ if and only if  $\lfloor m_u^w\rfloor_S=\lfloor m_u^v\rfloor_S$ for each word $u \in \Sigma^+$ of length at most $n-1$.
\end{theorem}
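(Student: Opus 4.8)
The plan is to specialise Theorem~\ref{thm_kletterGamma} to the unitriangular case by analysing what the polynomials $f_u^w$ look like when the diagonal entries are forced to equal $1_S$. Since $U_n(S)$ consists precisely of those matrices in $UT_n(S)$ whose diagonal entries are all $1_S$, I would like to say that $w=v$ holds in $U_n(S)$ if and only if $f_u^w$ and $f_u^v$ agree under the \emph{restricted} evaluation $\underline{x}(s,i)=1_S$ for all $s,i$. First I would examine the effect of this substitution on the formal polynomial $f_u^w$: each monomial is a product of factors $x(s,\rho_k)^{\beta_s^w(\alpha_k,\alpha_{k+1})}$, and setting every variable to $1_S$ collapses each such monomial to $1_S$. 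Hence $f_u^w$ evaluated at the all-ones point becomes $\sum_{\alpha} 1_S$, a sum of $1_S$ over all admissible tuples $\alpha \in \mathcal{A}_u^w$. By the remark following the definition of $f_{u,\rho}^w$ (that $f_u^w\neq 0_S$ iff $u$ is a scattered subword of $w$), the number of such tuples is exactly the scattered multiplicity $m_u^w$, so this evaluation yields $\lfloor m_u^w\rfloor_S = \sum_{j=1}^{m_u^w} 1_S$. This identifies the combinatorial quantity appearing in the statement.

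The genuine content is to show that it suffices to test functional equivalence of $f_u^w$ and $f_u^v$ at the single point where all variables are $1_S$, rather than over all of $S^{\Sigma\times[n]}$ as Theorem~\ref{thm_kletterGamma} demands. For the forward direction, if $w=v$ holds in $U_n(S)$, then in particular the morphism $\phi$ used in the proof of Theorem~\ref{thm_kletterGamma} (which places $1_S$ on the superdiagonal to trace out the path $\rho=(1,\dots,|u|+1)$ and arbitrary diagonal values elsewhere) must be adjusted so that its image lies in $U_n(S)$; this is arranged by fixing all diagonal entries to $1_S$, forcing $\underline{x}\equiv 1_S$. Then $\phi(w)_{1,|u|+1}=f_u^w(\underline 1)=\lfloor m_u^w\rfloor_S$ and likewise for $v$, so equality of the two sides gives $\lfloor m_u^w\rfloor_S=\lfloor m_u^v\rfloor_S$. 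For the converse, I would run the satisfaction argument of Theorem~\ref{thm_kletterGamma}: for any morphism $\phi\colon \Sigma^+\to U_n(S)$ the induced $\underline x$ satisfies $\underline x(s,i)=\phi(s)_{i,i}=1_S$ for every $s,i$ because the matrices are unitriangular. Thus each polynomial is evaluated only at the all-ones point, and Lemma~\ref{entries} expresses $\phi(w)_{i,j}$ entirely in terms of the values $f_{u,\rho}^w(\underline 1)$; since $f_{u,\rho}^w$ differs from $f_u^w$ only by relabelling variables, its all-ones value is also $\lfloor m_u^w\rfloor_S$, which by hypothesis equals $\lfloor m_u^v\rfloor_S$ for every $u$ of length at most $n-1$.

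Assembling these, both $\phi(w)_{i,j}$ and $\phi(v)_{i,j}$ become identical sums over paths $\rho\in[n]_{i,j}^{|u|}$ of the off-diagonal products $\prod_k \phi(u_k)_{\rho_{k-1},\rho_k}$ weighted by the common value $\lfloor m_u^w\rfloor_S=\lfloor m_u^v\rfloor_S$, so $\phi(w)=\phi(v)$ entrywise and the identity holds. I expect the main technical obstacle to be the bookkeeping that legitimately reduces the full functional-equivalence condition of Theorem~\ref{thm_kletterGamma} to evaluation at the single all-ones point: one must check that restricting to unitriangular matrices genuinely forces $\underline x\equiv 1_S$ in \emph{both} directions (so that the falsifying morphism in the forward direction can be taken inside $U_n(S)$), and that no information is lost by discarding the diagonal variables. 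The identification of the number of admissible $\alpha$-tuples with the scattered multiplicity $m_u^w$, together with the definition $\lfloor m\rfloor_S=\sum_{j=1}^m 1_S$, is the step that converts the algebraic statement into the purely combinatorial criterion advertised in the theorem.
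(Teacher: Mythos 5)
Your proposal is correct and follows essentially the same route as the paper: both directions reduce to Lemma~\ref{entries}, using the fact that unitriangularity forces $\underline{x}\equiv 1_S$ so that $f_{u,\rho}^w(\underline{1})=\lfloor m_u^w\rfloor_S$, and the falsifying direction uses exactly the paper's morphism with $1_S$ on the diagonal and superdiagonal entries tracing out $u$. Your framing via Theorem~\ref{thm_kletterGamma} is only cosmetic, since the actual argument you run goes through Lemma~\ref{entries} directly, just as the paper's proof does.
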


\begin{proof}
Let $\phi: \Sigma^+ \rightarrow U_n(S)$ be a morphism. Since every element of the image of $\phi$ has all diagonal entries equal to $1_S$ it follows from Lemma \ref{entries} and the definition of the polynomials $f_{u, \rho}^w$ that for all $1 \leq i< j \leq n$, we have
$$\phi(w)_{i,j} = \sum_{\substack{u \in \Sigma^+,\\ |u| \leq n-1}}\sum_{\rho \in [n]_{i,j}^{|u|}} \left(\prod_{k=1}^{|u|} \phi(u_k)_{\rho_{k-1}, \rho_k}\right) \cdot\lfloor m_u^w \rfloor_S,$$
where $m_u^w$ denotes the scattered multiplicity of $u$ in $w$. Since these multiplicities account for the only part of the formula which directly depends upon $w$, it is then clear that if each of the equalities $\lfloor m_u^w \rfloor_S= \lfloor m_u^v \rfloor_S$ holds, then we must have $w=v$ in $U_n(S)$.

Now suppose $w=v$ is satsified in $U_n(S)$ and let $u$ be a word of length $l <n$ with scattered multiplicities $m_u^w$ and $m_u^v$ in $w$ and $v$ respectively. Consider the morphism $\phi : \Sigma^+ \to U_n(S)$ defined by
$$\phi(s)_{p,p} \ = \ 1_S, \mbox{ for all } p \in [n] \mbox{ and } s \in \Sigma; \mbox{ and}$$
$$\phi(s)_{p,q} \ = \ \begin{cases}
1_S & \mbox{if } s=u_i, p=i, q=i+1\\
0_S & \mbox{otherwise}.
\end{cases}$$
Notice that  Lemma \ref{entries} then yields $\lfloor m_u^w \rfloor_S = \phi(w)_{1, l+1} = \phi(v)_{1, l+1}= \lfloor m_u^v\rfloor_S$.
\end{proof}

\begin{proposition}
Let $S$ and $T$ be commutative semirings. The semigroups $U_n(S)$ and $U_n(T)$ generate the same variety of semigroups if and only if $1_S$ and $1_T$ generate isomorphic semirings.
\end{proposition}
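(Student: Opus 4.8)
The plan is to use Theorem~\ref{modS} as the bridge between the variety generated by $U_n(S)$ and a purely arithmetic property of the subsemiring generated by $1_S$. The key observation is that the criterion in Theorem~\ref{modS} depends on $S$ only through the elements $\lfloor m \rfloor_S = \sum_{j=1}^m 1_S$ for $m \in \mathbb{N}_0$; that is, an identity $w=v$ holds in $U_n(S)$ precisely when $\lfloor m_u^w\rfloor_S = \lfloor m_u^v\rfloor_S$ for every word $u$ of length at most $n-1$. Writing $S_0$ for the subsemiring generated by $1_S$, I would first note that $S_0$ consists exactly of the elements $\lfloor m \rfloor_S$, and that the additive and multiplicative behaviour of these elements is determined by the map $\mathbb{N}_0 \to S_0,\ m \mapsto \lfloor m\rfloor_S$. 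Thus the whole collection of constraints appearing in Theorem~\ref{modS} is governed solely by the isomorphism type of $S_0$.

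For the \emph{if} direction, I would assume $1_S$ and $1_T$ generate isomorphic semirings, so that there is a semiring isomorphism $\theta: S_0 \to T_0$ necessarily satisfying $\theta(\lfloor m\rfloor_S) = \lfloor m \rfloor_T$ for all $m$ (since any semiring homomorphism fixing the multiplicative identity respects finite sums of it). Then for any words $w, v$ and any $u$, the equality $\lfloor m_u^w\rfloor_S = \lfloor m_u^v\rfloor_S$ holds if and only if $\lfloor m_u^w\rfloor_T = \lfloor m_u^v\rfloor_T$ holds, because $\theta$ is a bijection. By Theorem~\ref{modS} this means $\mathrm{Id}(U_n(S)) = \mathrm{Id}(U_n(T))$, and two semigroups generate the same variety exactly when they satisfy the same identities; hence $U_n(S)$ and $U_n(T)$ generate the same variety.

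For the \emph{only if} direction, I would argue contrapositively: suppose $S_0 \not\cong T_0$, and aim to exhibit an identity separating the two monoids. Since both $S_0$ and $T_0$ are generated by a single element as semirings, their isomorphism type is captured by the equivalence relation on $\mathbb{N}_0$ recording which pairs $(a,b)$ satisfy $\lfloor a\rfloor = \lfloor b\rfloor$ together with how these elements add and multiply; non-isomorphism must manifest as some additive or multiplicative coincidence holding in one but not the other. I would reduce to the additive data, i.e. to the relation $\lfloor a\rfloor_S = \lfloor b\rfloor_S$, since the multiplicative structure on $\{\lfloor m\rfloor\}$ is forced by distributivity once the additive identifications are known (for $\lfloor a\rfloor \cdot \lfloor b\rfloor = \lfloor ab\rfloor$). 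Hence non-isomorphism yields some pair $a \neq b$ with $\lfloor a\rfloor_S = \lfloor b\rfloor_S$ but $\lfloor a\rfloor_T \neq \lfloor b\rfloor_T$ (or vice versa). The remaining task is to realise the numbers $a$ and $b$ as scattered multiplicities: I would construct words $w, v$ over a suitable alphabet and a fixed word $u$ (of length at most $n-1$) such that $m_u^w = a$, $m_u^v = b$, while $m_{u'}^w = m_{u'}^v$ for all other relevant $u'$. The simplest approach is to take $n=2$ so that only single-letter words $u$ are relevant, whence $m_u^w = |w|_u$ and the scattered multiplicities are just letter counts, which are trivially prescribable; one then checks that larger $n$ only strengthens the separating power, so an identity separating $U_2(S)$ from $U_2(T)$ already separates the varieties.

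The main obstacle I anticipate is the \emph{only if} direction, specifically the step of extracting, from the abstract failure of $S_0 \cong T_0$, a \emph{single} concrete additive coincidence of the form $\lfloor a\rfloor = \lfloor b\rfloor$ that distinguishes them, and then engineering words whose scattered multiplicities hit exactly these values while leaving all shorter-subword multiplicities matched. Verifying that the multiplicative structure is indeed forced by the additive one (so that non-isomorphism is detected purely by the kernel of $m\mapsto\lfloor m\rfloor$) requires some care, as does confirming that the engineered words can be chosen subject to the balance and content constraints imposed by Lemma~\ref{count} and Corollary~\ref{words}; I would expect the cleanest route to sidestep most of this by working at $n=2$ and reducing everything to prescribing letter frequencies.
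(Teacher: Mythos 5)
Your \emph{if} direction and your reduction of non-isomorphism to a difference of ``kernels'' (the relations $\lfloor a\rfloor_R=\lfloor b\rfloor_R$, with multiplication forced by $\lfloor a\rfloor\cdot\lfloor b\rfloor=\lfloor ab\rfloor$) are correct and agree with the paper. The genuine gap is the final step of your \emph{only if} direction: the claim that ``larger $n$ only strengthens the separating power, so an identity separating $U_2(S)$ from $U_2(T)$ already separates the varieties.'' The monotonicity runs the wrong way. Since $U_2(R)$ embeds into $U_n(R)$, we have ${\rm Id}(U_n(R))\subseteq{\rm Id}(U_2(R))$: identities become \emph{harder} to satisfy as $n$ grows. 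So although your level-$2$ identity still fails in $U_n(T)$, it will typically also fail in $U_n(S)$, and then it separates nothing. Concretely, take $S=\mathbb{B}$ (the quotient of $\mathbb{N}_0$ identifying all integers $\geq 1$) and $T$ the quotient of $\mathbb{N}_0$ identifying all integers $\geq 2$; these are non-isomorphic, and $x=x^2$ separates $U_2(S)$ from $U_2(T)$. But in $U_3(S)$ the identity $x=x^2$ \emph{fails}, because the subword $u'=xx$ has multiplicities $m_{xx}^{x}=0$ and $m_{xx}^{x^2}=1$, and $\lfloor 0\rfloor_S\neq\lfloor 1\rfloor_S$; so it fails in both $U_3(S)$ and $U_3(T)$ and distinguishes nothing. (The monoids $U_3(S)$ and $U_3(T)$ \emph{are} separated, but by a different, $n$-adapted identity such as $x^2=x^3$.) Indeed, by Theorem~\ref{modS} at $n=2$, ``${\rm Id}(U_2(S))\neq{\rm Id}(U_2(T))$'' is equivalent to ``the kernels differ,'' so your lifting claim is exactly the only-if direction of the proposition at level $n$ --- assuming it is circular.

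A second, related problem is your word-engineering requirement that $m_u^w=a\neq b=m_u^v$ while $m_{u'}^w=m_{u'}^v$ \emph{exactly} for all other $u'$ of length at most $n-1$: for $n\geq 3$ this is generally impossible. If $u$ is a single letter $x$ then $m_{xx}^w=\binom{a}{2}\neq\binom{b}{2}=m_{xx}^v$ once $a\neq b$ are not both in $\{0,1\}$; and over a two-letter alphabet one always has $m_{xy}^w+m_{yx}^w=|w|_x|w|_y$, so a mismatch in $m_{xy}$ forces one in $m_{yx}$. What a correct proof needs is matching of all other multiplicities only modulo $\lfloor\cdot\rfloor_S$, and that requires choosing words adapted both to $n$ and to the structure (index and period) of the kernel of $S$. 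This is where the paper's converse differs from yours: it stays at level $n$, considering the identities $a^j=a^k$ with $u=a$, whose validity in $U_n(R)$ is governed by the multiplicities $m_{a^l}^{a^j}=\binom{j}{l}$ for all $l\leq n-1$, and recovers the relations $\lfloor j\rfloor_R=\lfloor k\rfloor_R$ from this family of one-letter identities (even there, the longer subwords $a^l$ must be accounted for). Your reduction to $n=2$ discards precisely this $n$-dependence, and that is what breaks the argument.
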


\begin{proof}
If $1_S$ and $1_T$ generate isomorphic semirings, then for all $j, k \in \mathbb{N}_0$ we have $\lfloor j \rfloor_S = \lfloor k \rfloor_S$ if and only if $\lfloor j \rfloor_T = \lfloor k \rfloor_T$. It then follows immediately from Theorem \ref{modS} that $U_n(S)$ and $U_n(T)$ satisfy exactly the same semigroup identities.

Conversely, if $U_n(S)$ and $U_n(T)$ satisfy the same identities, it follows that for all words $w,v,u \in \Sigma^+$ we must have $\lfloor m_u^w \rfloor_S= \lfloor m_u^v \rfloor_S$ if and only if $\lfloor m_u^w \rfloor_T= \lfloor m_u^v \rfloor_T$. Consideration of all pairs of words $w=a^j,v=a^k$ with respect to the fixed word $u=a$ of length $1$ allows us to determine all relations of the form $\lfloor j \rfloor_R = \lfloor k \rfloor_R$ for $j,k \in \mathbb{N}$ and $R=S, T$. Since the same set of relations holds for $R=S$ and $R=T$, it follows that $1_S$ and $1_T$ generate isomorphic semirings.
\end{proof}

\begin{corollary}
Let $S$ be an idempotent semiring. The identity $w=v$ over alphabet $\Sigma$ is satisfied in $U_n(S)$ if and only if $w$ and $v$ admit the same set of scattered subwords of length at most $n-1$.
\end{corollary}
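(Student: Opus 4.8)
The plan is to read the corollary off directly from Theorem~\ref{modS} once we understand how the quantity $\lfloor m \rfloor_S$ behaves in an idempotent semiring. By Theorem~\ref{modS}, the identity $w=v$ holds in $U_n(S)$ precisely when $\lfloor m_u^w \rfloor_S = \lfloor m_u^v \rfloor_S$ for every $u \in \Sigma^+$ with $|u| \leq n-1$, so everything reduces to understanding these invariants when $S$ is idempotent.

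First I would observe that idempotency collapses $\lfloor m \rfloor_S$ to a two-valued quantity. By definition $\lfloor m \rfloor_S = \sum_{j=1}^{m} 1_S$, so $\lfloor 0 \rfloor_S = 0_S$, while for $m \geq 1$ the idempotency of addition gives, by a trivial induction on $m$ using $1_S + 1_S = 1_S$, that $\sum_{j=1}^{m} 1_S = 1_S$. Hence $\lfloor m \rfloor_S = 0_S$ if $m = 0$ and $\lfloor m \rfloor_S = 1_S$ if $m \geq 1$. Next I would relate this to scattered subwords: since $m_u^w$ counts the number of ways $u$ occurs as a scattered subword of $w$, we have $m_u^w \geq 1$ exactly when $u$ is a scattered subword of $w$, and $m_u^w = 0$ otherwise. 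Combining the two observations, $\lfloor m_u^w \rfloor_S$ equals $1_S$ if $u$ is a scattered subword of $w$ and $0_S$ if it is not; that is, $\lfloor m_u^w \rfloor_S$ is precisely the indicator (valued in $\{0_S, 1_S\}$) of the statement ``$u$ is a scattered subword of $w$''.

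Finally I would assemble the equivalence: the condition $\lfloor m_u^w \rfloor_S = \lfloor m_u^v \rfloor_S$ for all $u$ of length at most $n-1$ holds if and only if, for every such $u$, the word $u$ is a scattered subword of $w$ exactly when it is a scattered subword of $v$, i.e. if and only if $w$ and $v$ have the same set of scattered subwords of length at most $n-1$. The forward implication is immediate from the indicator description, while the reverse implication uses that $0_S \neq 1_S$, so that the two indicator values are genuinely distinguished.

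I do not expect any serious obstacle here, as the argument is essentially a direct translation through Theorem~\ref{modS}. The only point requiring a word of care is the non-degeneracy $0_S \neq 1_S$, which is what makes the indicator faithful (for the trivial one-element semiring every identity holds vacuously while the scattered-subword condition can fail); this holds automatically for every nontrivial idempotent semiring, in particular for all the interval semirings of interest in later sections.
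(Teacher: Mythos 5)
Your proof is correct and takes essentially the same approach as the paper: both read the corollary off Theorem~\ref{modS} via the observation that idempotency collapses $\lfloor m \rfloor_S$ to an indicator, equal to $1_S$ when $m \geq 1$ and $0_S$ when $m = 0$, so that equality of these invariants is exactly coincidence of scattered subword sets. Your closing remark that the ``only if'' direction needs $0_S \neq 1_S$ (failing for the trivial one-element semiring) is a legitimate point of care that the paper's one-line proof silently glosses over.
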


\begin{proof}
If $S$ is idempotent then it is easy to see that 
$$\lfloor m_u^v\rfloor_S = \begin{cases}
1_S & \text{if  } u \text{ is a scattered subword of } w \\
0_S & \text{otherwise.} \\
\end{cases}$$
\end{proof}

The previous results generalise a result of Volkov \cite{V04}, who proved that $w=v$ is a semigroup identity for $U_n(\mathbb{B})$ if and only if $w$ and $v$ have the same scattered subwords of length at most $n-1$. Since the results of that paper also show that the  unitriangular Boolean matrices $U_n(\mathbb{B})$, the monoid $R_n$ of reflexive binary relations on a set of cardinality $n$, and the Catalan monoid $C_n$ all satisfy exactly the same set of identities, we get the following immediate corollary.

\begin{corollary}
Let $S$ be an idempotent semiring. The semigroup $U_n(S)$ satisfies exactly the same semigroup identities as the semigroup of reflexive relations $R_n$ or the Catalan monoid $C_n$.
\end{corollary}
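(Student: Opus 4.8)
The plan is to observe that this statement is an essentially immediate consequence of the preceding corollary together with Volkov's theorem, the crucial (if trivial) point being that the Boolean semiring $\mathbb{B}$ is itself an idempotent semiring. First I would note that $\mathbb{B}$ is idempotent, so that the preceding corollary applies equally to $S$ and to $\mathbb{B}$: in each case the identity $w=v$ holds precisely when $w$ and $v$ possess the same set of scattered subwords of length at most $n-1$. Since this combinatorial condition on the pair $(w,v)$ makes no reference whatsoever to the underlying semiring, it follows at once that ${\rm Id}(U_n(S)) = {\rm Id}(U_n(\mathbb{B}))$ for every idempotent semiring $S$; that is, the semiring-independent scattered-subword description serves as a bridge.

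Next I would invoke Volkov's result, recalled in the paragraph immediately preceding the statement, which asserts that $U_n(\mathbb{B})$, the reflexive relation monoid $R_n$, and the Catalan monoid $C_n$ all satisfy exactly the same semigroup identities, i.e. ${\rm Id}(U_n(\mathbb{B})) = {\rm Id}(R_n) = {\rm Id}(C_n)$. Chaining this chain of equalities with the one established in the first step yields ${\rm Id}(U_n(S)) = {\rm Id}(R_n) = {\rm Id}(C_n)$, which is exactly the assertion of the corollary.

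I do not anticipate any genuine obstacle: the mathematical content has been entirely front-loaded, on the one hand into the scattered-subword characterization furnished by the preceding corollary, and on the other into Volkov's theorem. The only point demanding a moment's care is the observation that $\mathbb{B}$ lies within the scope of the preceding corollary, which is what permits the common combinatorial description of identities to be used uniformly for $U_n(S)$ and for the finite monoids $R_n$ and $C_n$. Everything else is a formal substitution of one set of identities for another.
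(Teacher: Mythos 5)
Your proposal is correct and follows essentially the same route as the paper: the paper likewise treats this as an immediate consequence of the scattered-subword characterization of ${\rm Id}(U_n(S))$ from the preceding corollary together with Volkov's result that $U_n(\mathbb{B})$, $R_n$ and $C_n$ satisfy exactly the same identities. The only cosmetic difference is that you obtain the $\mathbb{B}$ case by specializing the preceding corollary, whereas the paper cites Volkov's own characterization of ${\rm Id}(U_n(\mathbb{B}))$; these amount to the same bridge.
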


The monoid $U_n(S)$ can be viewed as an oversemigroup of $U_n(\mathbb{B})$ allowing for entries over the idempotent semiring $S$, and so it is natural to ask if there are analogous extensions of $R_n$ and $C_n$. We note that there is an obvious Boolean matrix representation of $R_n$, formed by sending a relation $R$ to the Boolean matrix whose $i,j$th entry is $1$ if and only if $i$ and $j$ are related by $R$. In the following section we shall consider a natural analogue of $R_n$ consisting of matrices over a semiring $S$ with diagonal entries all equal to the multiplicative identity of $S$. It is clear that, in general, the set of all such matrices need \emph{not} form a semigroup (e.g. over the tropical semiring such matrices are not closed under multiplication). We shall therefore restrict our attention to a particular class of idempotent semirings.
\section{Generalised reflexive monoids}
\label{sec_reflexive}

\begin{lemma}
\label{Jtriv}
Let $S$ be an idempotent semiring, and let $V$ be a subsemigroup of $M_n(S)$ with the property that every element of $V$ has all diagonal entries equal to $1_S$.\\
\begin{itemize}
\item[(i)] If $A = U(1)X(1)U(2) \cdots U(L)X(L)U(L+1)$ and $B = X(1) \cdots X(L)$ for some $U(i), X(i) \in V$, then $B \preceq A$.
\item[(ii)] For all $A \in V$ we have
$$I_n \preceq A \preceq A^2 \preceq A^2 \preceq \cdots \preceq A^n \preceq \cdots$$
where $I_n$ denotes the identity matrix of $M_n(S)$.\\
(In particular, $V$ is $\mathcal{J}$-trivial and so every regular element of $V$ is idempotent.)
\end{itemize}
\end{lemma}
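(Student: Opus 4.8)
The plan is to reduce both statements to a single observation: every matrix of $V$ dominates the identity matrix with respect to $\preceq$. Indeed, for any $U \in V$ we have $U_{i,i} = 1_S = (I_n)_{i,i}$ by hypothesis, while for $i \neq j$ we have $U_{i,j} \geq 0_S = (I_n)_{i,j}$ since $0_S$ is the least element of the idempotent semiring $S$. Thus $I_n \preceq U$ for every $U \in V$.

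To prove (i) I would insert copies of the identity matrix between the factors of $B$, writing $B = I_n X(1) I_n X(2) \cdots I_n X(L) I_n$ with $L+1$ factors equal to $I_n$. Since $M_n(S)$ is an ordered monoid---multiplication respects $\preceq$ on both the left and the right---and since $I_n \preceq U(i)$ for each $i$ by the observation above, I can replace the identity factors one at a time by the corresponding $U(i)$, each replacement producing a $\preceq$-larger product. After all $L+1$ replacements the product equals $U(1) X(1) U(2) \cdots U(L) X(L) U(L+1) = A$, so $B \preceq A$.

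For (ii), the inequality $I_n \preceq A$ is just the observation applied to $A \in V$. The remaining inequalities follow by showing $A^k \preceq A^{k+1}$ for each $k \geq 1$: multiplying $I_n \preceq A$ on the left by $A^k$ and using monotonicity gives $A^k = A^k I_n \preceq A^k A = A^{k+1}$, and chaining these yields the displayed ascending sequence. (Each such inequality is also the instance of (i) in which every $X(j)$ equals $A$.)

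For the parenthetical remarks I would derive $\mathcal{J}$-triviality from (i): if $a \mathcal{J} b$ then $a = pbq$ and $b = ras$ for some factors in $V$ (or equal to the identity, which acts as $I_n$); applying (i)---whose proof uses only $I_n \preceq U(i)$ and so remains valid when some factor is $I_n$---gives $b \preceq a$ and $a \preceq b$, whence $a = b$ by antisymmetry of $\preceq$. That every regular element of a $\mathcal{J}$-trivial semigroup is idempotent is then a standard fact. I anticipate no serious obstacle; the only care required is the bookkeeping of the interleaved identity factors in (i) and the remark that the argument tolerates factors equal to $I_n$.
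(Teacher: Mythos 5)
Your proof is correct, but it is organised differently from the paper's. The paper proves (i) by brute-force expansion: it writes $A_{i,j}$ as a sum of products $U(1)_{\rho_0,\rho_1}X(1)_{\rho_1,\rho_2}\cdots U(L+1)_{\rho_{2L},\rho_{2L+1}}$ over all index tuples, then uses $a+b\geq a,b$ (idempotency) to discard all terms except those in which each $U(k)$ contributes a diagonal entry $1_S$; the surviving sub-sum is exactly $B_{i,j}$. You instead isolate the single fact $I_n \preceq U$ for every $U\in V$ (diagonal entries agree, off-diagonal entries dominate $0_S$, the bottom element) and combine it with the ordered-monoid property of $M_n(S)$ already recorded in the paper's preliminaries, replacing the interleaved identity factors of $B = I_nX(1)I_n\cdots I_nX(L)I_n$ one at a time. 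The two arguments rest on the same underlying facts, but yours is more modular and, in places, tighter than the paper's: for (ii) the paper merely says the chain "follows immediately from part (i)", which read literally requires allowing $U$-factors equal to $I_n$ (not necessarily in $V$), whereas you prove $A^k \preceq A^{k+1}$ directly by multiplying $I_n \preceq A$ by $A^k$; and for $\mathcal{J}$-triviality the paper asserts $A = PBQ$, $B = XAY$ with $P,Q,X,Y \in V$, silently ignoring that Green's relations on a non-monoid involve $V^1$, a point you handle explicitly by noting your proof of (i) tolerates factors equal to $I_n$ (since $I_n \preceq I_n$). What the paper's computational route buys in exchange is a template that recurs later (the restriction-of-sums technique reappears in the block-chain argument of Lemma 4.2), while your route buys brevity and makes transparent exactly which hypotheses ($1_S$ on the diagonal, $0_S$ least) are doing the work.
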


\begin{proof}
(i) Suppose that $A = U(1)X(1)U(2) \cdots U(L)X(L)U(L+1)$ and $B = X(1) \cdots X(L)$. Since every element of $V$ has only ones on its diagonal, for all $i,j \in [n]$ this gives
\begin{eqnarray*}
A_{i,j} &=& \sum U(1)_{\rho_0,\rho_1}X(1)_{\rho_1,\rho_2}U(2)_{\rho_2,\rho_3}\cdots X(L)_{\rho_{2L-1},\rho_{2L}}U(L+1)_{\rho_{2L},\rho_{2L+1}}
\end{eqnarray*}
where the sum ranges over all choices of $\rho_i \in [n]$, with $\rho_0=i$ and $\rho_{2L+1} = j$. Since $a+b \geq a,b$ for all $a,b \in S$, it follows that by restricting the choices for the $\rho_i$ we will obtain a partial sum that must be less than or equal to $A_{i,j}$. In particular, we have
\begin{eqnarray*}
A_{i,j} &\geq& \sum U(1)_{\rho_0,\rho_0}X(1)_{\rho_0,\rho_1}U(2)_{\rho_1,\rho_1}\cdots X(L)_{\rho_{L-1},\rho_{L}}U(L+1)_{\rho_{L},\rho_{L}}
\end{eqnarray*}
where the sum ranges over all choices of $\rho_i \in [n]$, with $\rho_0=i$ and $\rho_{L} = j$. Since all diagonal entries of elements of $V$ are equal to $1_S$, this gives
\begin{eqnarray*}
A_{i,j} &\geq& \sum X(1)_{\rho_0,\rho_1}\cdots X(L)_{\rho_{L-1},\rho_{L}},
\end{eqnarray*}
where the sum ranges over all choices of $\rho_i \in [n]$, with $\rho_0=i$ and $\rho_{L} = j$. By the definition of matrix multiplication, the latter is equal to $B_{i,j}$. Thus for all $i,j \in [n]$ we have $A_{i,j} \geq B_{i,j}$, and hence $B \preceq A$.

(ii) It follows immediately from part (i) that the powers are non-decreasing. In particular, if $A \mathcal{J} B$ in $V$ then there exist $P,Q,X,Y \in V$ with $A = PBQ$ and $B=XAY$. Now by part (i) this gives $A \preceq B$ and $B \preceq A$, and hence $A=B$. Recalling that an element $A \in V$ is regular if and only if it is $\mathcal{D}$-related to an idempotent, it follows immediately that $A$ is regular if and only if it is idempotent.
\end{proof}

From now on let $S$ be an interval semiring and define
$$R_n(S) = \{A \in M_n(S): A_{i,i} = 1_S\}.$$
It is easily verified that $R_n(S)$ is a semigroup satisfying the conditions of Lemma~\ref{Jtriv}. Let $Z$ be the element of $R_n(S)$ given by $Z_{i,j} = 1_S$ for all $i$ and $j$. Then it is easy to see that $I_n \preceq A \preceq Z$ for all $A \in R_n(S)$, with $AZ = Z = ZA$. In the case where $S=\mathbb{B}$, it is clear that $R_n(\mathbb{B})$ is isomorphic to the monoid $R_n$ of reflexive binary relations on a set of cardinality $n$.

Let $\rho:=(\rho_0, \ldots, \rho_L)$ be an $L+1$-tuple of elements from $[n]$. We shall say that $\rho$ is a \emph{block chain} of length $L+1$ if $\rho$ has the form:
$$\rho:=(i_0, \ldots,i_0, i_1, \ldots, i_1, \ldots, i_k \ldots, i_k),$$
where $i_0, \ldots, i_k$ are \emph{distinct} elements of $[n]$ and thus, $k \leq n-1$.

\begin{lemma}\label{aperiodic}
Let $S$ be an interval semiring.
\begin{itemize}
\item[(i)] If $A = X(1) \cdots X(L)$ in $R_n(S)$, then for all $i,j \in [n]$ we have
$$A_{i,j} = \sum X(1)_{\rho_0, \rho_1}X(2)_{\rho_1, \rho_2} \cdots X(L)_{\rho_{L-1}, \rho_L},$$
where the sum ranges over all block chains $\rho:=(\rho_0, \ldots, \rho_L)$ with $\rho_0=i$ and $\rho_L=j$.
\item[(ii)] For all $A \in R_n(S)$ and all $N \geq n-1$ we have $A^N = A^{n-1}$.\\
(In particular, $A^{n-1}$ is idempotent and $R_n(S)$ is aperiodic.)
\end{itemize}
\end{lemma}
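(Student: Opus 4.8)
The plan is to prove (i) by expanding the matrix product in the usual way and then reducing the resulting sum to one over block chains, exploiting the two defining features of an interval semiring: addition is idempotent, so that a finite sum is the join of its summands with respect to the natural order $\leq$ and dominated summands may be discarded; and $1_S$ is the top element, so that every product of entries of matrices in $R_n(S)$ is $\leq 1_S$. Expanding $A = X(1)\cdots X(L)$ by the definition of matrix multiplication gives
$$A_{i,j} \ = \ \sum X(1)_{\rho_0,\rho_1}\cdots X(L)_{\rho_{L-1},\rho_L},$$
where the sum ranges over \emph{all} tuples $\rho=(\rho_0,\ldots,\rho_L)$ with $\rho_0=i$ and $\rho_L=j$. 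It then remains only to show that this full sum equals its subsum over block chains.

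First I would argue that every summand is dominated by the weight of some block chain. If $\rho$ is not a block chain, then collapsing maximal runs of equal entries leaves a reduced sequence with a repeated vertex $a$; let $p$ and $q$ be the least and greatest indices with $\rho_p=\rho_q=a$, so that every occurrence of $a$ lies in $[p,q]$ and there is a departure from $a$ strictly inside. Replacing $\rho_p,\ldots,\rho_q$ all by $a$ yields a tuple $\rho'$ agreeing with $\rho$ outside $[p,q]$; the replaced factors all become $X(t)_{a,a}=1_S$, whereas the original factors formed a product that is $\leq 1_S$ since $1_S$ is the top element and multiplication respects $\leq$. As the unchanged left and right portions multiply both middles on the same sides, the weight of $\rho'$ is $\geq$ that of $\rho$, and $a$ is now unrepeated. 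Since this strictly lowers the number of transitions $\rho_{t-1}\neq\rho_t$, iterating terminates at a block chain dominating $\rho$. Because addition is idempotent, each non-block-chain summand may be absorbed into a block-chain summand dominating it, giving (i).

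For (ii) I would apply (i) to $A^N$. Along a block chain every diagonal (``staying'') factor equals $1_S$, so the weight of a block chain depends only on its underlying sequence $i=i_0,i_1,\ldots,i_k=j$ of distinct vertices and equals $\prod_{t=1}^{k} A_{i_{t-1},i_t}$, with $k\leq n-1$ forced by distinctness. A block chain of length $N+1$ realizes exactly those distinct-vertex sequences with $k\leq\min(N,n-1)$; hence for every $N\geq n-1$ the attainable sequences are precisely those with $k\leq n-1$, independently of $N$. Thus $(A^N)_{i,j}=\sum_{P}\prod_{t=1}^{k}A_{i_{t-1},i_t}$, summed over all distinct-vertex paths $P$ from $i$ to $j$, a quantity independent of $N\geq n-1$, so $A^N=A^{n-1}$. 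The parenthetical claims follow immediately: $(A^{n-1})^2=A^{2n-2}=A^{n-1}$ gives idempotency, and $A^{n-1}=A^n$ gives aperiodicity of $R_n(S)$.

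The main obstacle is the reduction in (i): one must check carefully that collapsing a repeated vertex never decreases the weight and that the procedure terminates. The non-decrease is exactly where the interval hypothesis is indispensable, through $1_S$ being the top element; the analogous statement fails for a general idempotent semiring such as the tropical semifield, where diagonal entries need not dominate. Once (i) is established, part (ii) reduces to the bookkeeping of determining which distinct-vertex paths can be padded to a block chain of prescribed length.
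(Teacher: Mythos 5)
Your proof is correct. Part (i) is essentially identical to the paper's argument: the same expansion over all tuples, the same collapsing move (replace the segment between the extreme occurrences of a repeated vertex by that vertex, so the interior factors become $1_S$ and dominate the originals because $1_S$ is the top element), and the same absorption of dominated terms via idempotency of addition; your explicit termination argument (the number of transitions strictly decreases) just makes precise what the paper leaves as ``by repeated application.'' In part (ii) you take a slightly different, and arguably cleaner, route: the paper proves the two inequalities separately --- $A^N \preceq A^{n-1}$ because every term in the block-chain sum for $A^N$ already occurs as a term for $A^{n-1}$, and $A^{n-1} \preceq A^N$ by citing Lemma \ref{Jtriv} (powers are non-decreasing) --- and concludes by antisymmetry of $\preceq$. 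You instead observe that, since a block chain's weight depends only on its underlying sequence of distinct vertices and idempotent addition collapses repeated summands, both $(A^N)_{i,j}$ and $(A^{n-1})_{i,j}$ equal one and the same sum over distinct-vertex paths from $i$ to $j$ (attainable exactly when $k \leq \min(N,n-1) = n-1$), giving equality directly. This makes your proof of (ii) self-contained, whereas the paper's version leans on the previously established monotonicity of powers; both are valid, and the combinatorial core (block chains, diagonal entries equal to $1_S$) is the same.
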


\begin{proof}
(i) Let $A = X(1) \cdots X(L)$ in $R_n(S)$. Then, by the definition of matrix multiplication,
$$A_{i,j} = \sum X(1)_{\rho_0,\rho_1} X(2)_{\rho_1, \rho_2} \cdots X(L)_{\rho_{L-1}, \rho_{L}},
$$
where the sum ranges over all $L+1$-tuples $\rho:=(\rho_0, \ldots, \rho_L)$, with $\rho_k \in [n]$ and $\rho_0=i, \rho_L=j$. Let $\rho$ be such a tuple, and suppose that $\rho$ is not a block chain. Then for some  $s,t$ with $s+1<t$ we must have $\rho_s \neq \rho_{s+1}$ and $\rho_s=\rho_t$. Consider the tuple $\rho':=(\rho'_0, \ldots, \rho'_L)$ obtained from $\rho$ by replacing each $\rho_k$ with $s<k <t$ by $\rho_s$. Since each diagonal entry is equal to $1_S$ and $1_S \geq a$ for all $a \in S$, it is easy to see that:
$$X(1)_{\rho'_0,\rho'_1} X(2)_{\rho'_1, \rho'_2} \cdots X(L)_{\rho'_{L-1}, \rho'_{L}} \geq X(1)_{\rho_0,\rho_1} X(2)_{\rho_1, \rho_2} \cdots X(L)_{\rho_{L-1}, \rho_{L}}.$$
By repeated application of the above argument, it is clear that
$$X(1)_{\sigma_0,\sigma_1} X(2)_{\sigma_1, \sigma_2} \cdots X(L)_{\sigma_{L-1}, \sigma_{L}} \geq X(1)_{\rho_0,\rho_1} X(2)_{\rho_1, \rho_2} \cdots X(L)_{\rho_{L-1}, \rho_{L}},$$
for some block chain $\sigma$. Since $a \leq b$ in $S$ if and only if $a+b=b$, it follows from the previous observation that taking the sum over all block chains must give the same result as taking the sum over all tuples. Thus
$$A_{i,j} = \sum X(1)_{\rho_0,\rho_1} X(2)_{\rho_1, \rho_2} \cdots X(L)_{\rho_{L-1}, \rho_{L}},
$$
where the sum ranges over all block chains $\rho:=(\rho_0, \ldots, \rho_L)$ with $\rho_0=i$ and $\rho_L=j$.

(ii) Let $A \in R_n(S)$ and $N \in \mathbb{N}$. Then by part (i)
$$(A^N)_{i,j} = \sum A_{\rho_0,\rho_1} A_{\rho_1, \rho_2} \cdots A_{\rho_{N-1}, \rho_{N}},
$$
where the sum ranges over all $N+1$-tuples of the form
$$\rho:=(i, \ldots,i, i_1, \ldots, i_1, \ldots, i_k \ldots, i_k, j\ldots, j),$$ where $i, i_1, \ldots, i_k, j$ are \emph{distinct} elements of $[n]$. Moreover, for such an $N+1$-tuple $\rho$, the fact that the diagonal entries of $A$ are all equal to $1_S$ means that the corresponding term of the summation is equal to
$$A_{i,i_1}A_{i_1,i_2} \cdots A_{i_{k-1}, i_k} A_{i_k,j}.$$
Thus for each $N\geq n-1$ we see that every term occurring in the summation above also occurs as a term in the corresponding summation for $A^{n-1}$, and hence $A^N \preceq A^{n-1}$. On the other hand, by Lemma \ref{Jtriv}, we know that $A^{n-1} \preceq A^{N}$ for all $N \geq n-1$. Thus we may conclude that $A^{n-1} = A^N$ for all $N \geq n-1$. In particular,
$$A^{n-1}A^{n-1} = A^{2n-2} = A^{n-1}.$$
(Recall that a semigroup $V$ is aperiodic if for every $a \in V$ there exists a positive integer $m$ such that $a^{m+1} = a^m$.)
\end{proof}

We note that in the case where $R_n(S)$ is finite, the fact that $R_n(S)$ is aperiodic follows directly from Lemma~\ref{Jtriv}, since every finite $\mathcal{H}$-trivial semigroup is aperiodic. For infinite semigroups, $\mathcal{J}$-triviality is not sufficient to deduce aperiodicity (for example, the semigroup of natural numbers under addition is an infinite $\mathcal{J}$-trivial semigroup which is clearly not aperiodic).

\begin{theorem}
\label{thm_reflexive}
Let $S$ be an interval semiring. The identity $w=v$ over alphabet $\Sigma$ is satisfied in $R_n(S)$ if and only if $w$ and $v$ have the same scattered subwords of length at most $n-1$.
\end{theorem}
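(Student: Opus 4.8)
The plan is to show, for an arbitrary morphism $\phi \colon \Sigma^+ \to R_n(S)$ and vertices $i,j \in [n]$, that the entry $\phi(w)_{i,j}$ depends on $w$ only through the set of scattered subwords of $w$ of length at most $n-1$; the theorem then follows at once. For the necessity direction I would use the inclusion $U_n(S) \subseteq R_n(S)$: since $U_n(S)$ consists precisely of those elements of $R_n(S)$ that are additionally upper triangular, it is a subsemigroup, and every morphism $\Sigma^+ \to U_n(S)$ is a morphism $\Sigma^+ \to R_n(S)$. Hence ${\rm Id}(R_n(S)) \subseteq {\rm Id}(U_n(S))$, and because an interval semiring is in particular idempotent, the earlier Corollary characterising ${\rm Id}(U_n(S))$ shows that if $w=v$ holds in $R_n(S)$ then $w$ and $v$ must have the same scattered subwords of length at most $n-1$.

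The substantive direction is sufficiency, and here I would argue directly from Lemma~\ref{aperiodic}(i). That lemma expresses $\phi(w)_{i,j}$ as a sum of products $\prod_{q=1}^{|w|} \phi(w_q)_{\rho_{q-1},\rho_q}$ taken only over block chains $\rho$ from $i$ to $j$. Each such block chain is determined by its tuple of distinct block values $\pi = (i_0 = i, i_1, \ldots, i_k = j)$ together with the $k$ positions $1 \leq p_1 < \cdots < p_k \leq |w|$ at which the value changes; since $i_0, \ldots, i_k$ are distinct elements of $[n]$ we automatically have $k \leq n-1$. As every element in the image of $\phi$ has all diagonal entries equal to $1_S$, each factor $\phi(w_q)_{i_t,i_t}$ arising from a non-transition position $q$ equals $1_S$ and may be deleted, so the product collapses to $\prod_{t=1}^{k} \phi(w_{p_t})_{i_{t-1},i_t}$, in which the letters $w_{p_1} \cdots w_{p_k}$ spell a scattered subword $u$ of $w$ of length $k$. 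Grouping the block chains with a fixed $\pi$ according to which subword $u$ their transition positions realise, and using that $S$ is idempotent (so that a summand occurring with positive multiplicity collapses to a single copy, while one of multiplicity $0$ vanishes), I would obtain
$$\phi(w)_{i,j} \ = \ \sum_{\pi = (i_0 = i,\ldots,i_k = j)} \ \sum_{\substack{u \text{ a scattered subword of } w, \\ |u| = k}} \ \prod_{t=1}^{k} \phi(u_t)_{i_{t-1},i_t},$$
where $\pi$ ranges over all tuples of distinct vertices of $[n]$ from $i$ to $j$. The right-hand side depends on $w$ only through its set of scattered subwords of length at most $n-1$, so if $w$ and $v$ share this set then $\phi(w)_{i,j} = \phi(v)_{i,j}$ for all $i,j$, and hence $\phi(w) = \phi(v)$.

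I expect the bookkeeping in the sufficiency step to be the main obstacle: one must match each block chain bijectively with a pair consisting of a tuple of distinct block values and a choice of transition positions, verify that the diagonal $1_S$ entries account for exactly the non-transition factors, and treat the degenerate case $i=j$ correctly, where the only block chain is the constant one, contributing the empty subword with empty product $1_S$, in agreement with $\phi(w)_{i,i} = 1_S$. Once the displayed formula is in place the result is immediate, and the argument parallels the role of Lemma~\ref{entries} and Theorem~\ref{modS} in the unitriangular setting.
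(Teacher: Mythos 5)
Your proof is correct and follows essentially the same route as the paper's: necessity via the inclusion $U_n(S) \subseteq R_n(S)$ together with the idempotent-semiring corollary of Theorem~\ref{modS}, and sufficiency by expanding $\phi(w)_{i,j}$ over block chains using Lemma~\ref{aperiodic}(i), collapsing the diagonal $1_S$ factors to leave a product indexed by a scattered subword, and invoking idempotency of addition to remove multiplicities. The only cosmetic difference is that you group block chains by their tuple of distinct vertices before the transition positions, whereas the paper groups by transition positions first; the resulting formula and argument are the same.
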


\begin{proof}
Noting that $U_n(S) \subseteq R_n(S)$, it suffices to show that if $w$ and $v$ have the same scattered subwords of length at most $n-1$, then $w=v$ holds in $R_n(S)$.

Let $\phi:\Sigma^+ \rightarrow R_n(S)$ be a morphism and let $w=w_1 \cdots w_q \in \Sigma^+$. By Lemma \ref{aperiodic} for each $i,j \in [n]$ we have
\begin{eqnarray*}
\phi(w)_{i,j} &=&  (\phi(w_1)\cdots
\phi(w_q))_{i,j}\\
&=&  \sum \phi(w_{1})_{\rho_0, \rho_1} \cdots
\phi(w_{q})_{\rho_{q-1}, \rho_{q}},
\end{eqnarray*}
where the sum ranges over all block chains $\rho$ of total length $q+1$, with first entry $i$ and last entry $j$. To each choice of $t=(t_0,t_1,\ldots,
t_p, t_{p+1})$ with $0=t_0<t_1 < \cdots < t_p<t_{p+1}=q+1$ and $p \leq n-1$ we may associate the set $B_t^{i,j}$ of all block chains of the form:
$$(\underbrace{i_0, \ldots, i_0}_{t_1-t_0}, \underbrace{i_1, \ldots, i_1}_{t_2-t_1}, \underbrace{i_2, \ldots, i_2}_{t_3-t_2} \ldots, \underbrace{i_{p-1}, \ldots, i_{p-1}}_{t_p-t_{p-1}} \underbrace{i_p, \ldots, i_p}_{t_{p+1}-t_p})$$
with $i_0=i$, $i_p=j$. It is easy to see that the set of all block chains of total length $q+1$ with first entry $i$ and last entry $j$ is the disjoint union of the sets $B_t^{i,j}$. Thus the summation above can be viewed as summing over all block chains in $B_t^{i,j}$ for all choices $0=t_0<t_1 < \cdots < t_p<t_{p+1}=q+1$.

Fix $t$ and consider the term of the summation corresponding to the block chain
$$(\underbrace{i_0, \ldots, i_0}_{t_1-t_0}, \underbrace{i_1, \ldots, i_1}_{t_2-t_1}, \underbrace{i_2, \ldots, i_2}_{t_3-t_2} \ldots, \underbrace{i_{p-1}, \ldots, i_{p-1}}_{t_p-t_{p-1}} \underbrace{i_p, \ldots, i_p}_{t_{p+1}-t_p}).$$
The fact that all diagonal entries are equal to $1_S$ means that the corresponding term is equal to
$$\phi(w_{t_1})_{i_0,i_1}\phi(w_{t_2})_{i_1,i_2} \cdots \phi(w_{t_p})_{i_{p-1}, i_p}.$$
It is then clear that the above expression depends only upon the choice of scattered subword $u=w_{t_1}\cdots w_{t_p}$ of $w$ of length $p \leq n-1$, and the intermediate vertices $i_1, \ldots, i_{p-1}$. Since addition in $S$ is idempotent, we may therefore conclude that
$$\phi(w)_{i,j} =  \sum \phi(u_1)_{i_0,i_1}\phi(u_2)_{i_1,i_2} \cdots \phi(u_p)_{i_{p-1},i_p},
$$
where the sum ranges over all scattered subwords $u$ of 
$w$  of length at most $n-1$, and over all choices of distinct $i_0, \ldots,i_p \in [n]$ with $i_0=i$ and $i_p=j$. It then follows that if $w$ and $v$ contain the same scattered subwords of length at most $n-1$ then $\phi(w)=\phi(v)$.
\end{proof}

\section{Catalan monoids and gossip}
\label{sec_catalan}
The Catalan monoid $C_n$ \cite{Solomon} is the monoid given by the presentation with generators $e_1,\ldots, e_{n-1}$ and relations 
\begin{equation}
\label{rels}
e_ie_i=e_i, \;\;\;\;\;e_ie_j=e_je_i \;\;\;\;\; e_ie_{i+1}e_i=e_{i+1}e_{i}e_{i+1} =e_i e_{i+1}
\end{equation}
for all appropriate $i,j$ with $|i-j|>1$. The name comes from the fact that $|C_n| = \frac{1}{n+1} \binom{2n}{n}$ is the $n$th Catalan number.

Say that a matrix $A \in M_n(\mathbb{B})$ is \emph{convex} if:
\begin{itemize}
\item[(1)] $A_{i,l}=A_{i,r}=1$ with $l \leq r$ implies $A_{i,k}=1$ for all $l \leq k \leq r$,
\item[(2)] $ A_{u,j}=A_{d,j}=1$ with $u \leq d$ implies $A_{k,j}=1$ for all $u \leq k \leq d$, and
\item[(3)] $A_{i,i}=1$ for all $i$.
\end{itemize}
By \cite[Proposition 3]{MS12} the set ${\rm Conv}_n$ of all convex Boolean matrices is a submonoid of $R_n$. Let $C_n^U = {\rm Conv}_n \cap U_n$ denote the monoid of all convex upper unitriangular matrices, and for $1 \leq i\leq n-1$ let $D(i)\in C_n^U$ be the matrix with $1$'s on the diagonal and a single off-diagonal $1$ in position $(i,i+1)$.
\begin{lemma}
\label{convex}
The matrices $D(1), \ldots, D(n-1)$ generate the monoid $C_n^U$ of all convex upper unitriangular Boolean matrices. Moreover, $C_n^U \cong C_n$.
\end{lemma}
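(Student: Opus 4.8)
The plan is to establish the lemma in two parts, matching its two assertions: first that the $D(i)$ generate $C_n^U$, and second that $C_n^U$ is isomorphic to the abstract Catalan monoid $C_n$. Throughout I would identify a matrix $A \in C_n^U$ with the set of its off-diagonal $1$-entries, that is, with a subset of $\{(i,j) : 1 \le i < j \le n\}$ satisfying the two convexity conditions (1) and (2) together with reflexivity (3), which is automatic here.

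For the generation statement, I would first compute the effect of multiplying by a generator. Since $D(i)$ has $1$'s on the diagonal and a single extra $1$ in position $(i,i+1)$, left or right multiplication by $D(i)$ acts on an arbitrary $B \in C_n^U$ by a controlled ``filling in'' of entries: concretely, $D(i)B$ has $(D(i)B)_{i,k} = B_{i,k} + B_{i+1,k}$, so it adds to row $i$ all the $1$'s present in row $i+1$, and similarly $BD(i)$ copies column $i$'s $1$'s into column $i-1$ (I would verify the exact indexing by direct Boolean matrix multiplication). The key observation is that these products stay inside $C_n^U$ — one must check that convexity is preserved, which follows because rows and columns of a convex matrix are ``intervals'' and the filling operation merges or extends such intervals. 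Given this, I would argue by induction on the number of off-diagonal $1$'s, or equivalently by the partial order $\preceq$, that every $A \in C_n^U$ is reachable: starting from $I_n$ and applying generators one can only increase entries (by Lemma~\ref{Jtriv}(ii)), and I would show that any ``minimal'' off-diagonal $1$ that is present in some target $A$ can be produced by an appropriate word in the $D(i)$, using that $D(i)D(i+1)\cdots$ products create longer-range $1$'s in positions $(i, i+2), (i,i+3)$, etc. The cleanest route is probably to exhibit, for each admissible convex $A$, an explicit factorisation into generators by reading off the ``staircase'' boundary of the interval structure.

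For the isomorphism $C_n^U \cong C_n$, I would define the map $C_n \to C_n^U$ sending the abstract generator $e_i$ to $D(i)$ and then verify two things. First, surjectivity is immediate from the generation statement just proved. Second, and this is the crux, I must check that the $D(i)$ satisfy the defining relations~\eqref{rels} of $C_n$, so that the map is a well-defined homomorphism, and then that no further collapsing occurs, i.e. that $|C_n^U| = |C_n| = \tfrac{1}{n+1}\binom{2n}{n}$. Verifying the relations is a finite computation: idempotency $D(i)^2 = D(i)$ is clear since $D(i)$ is already idempotent as a reflexive Boolean matrix; commutativity $D(i)D(j) = D(j)D(i)$ for $|i-j| > 1$ holds because the single off-diagonal entries occupy non-interacting rows and columns; and the braid-type relation $D(i)D(i+1)D(i) = D(i+1)D(i)D(i+1) = D(i)D(i+1)$ I would confirm by computing all three products explicitly (each yields the matrix with extra $1$'s in positions $(i,i+1)$, $(i+1,i+2)$ and $(i,i+2)$).

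The main obstacle is the final injectivity/counting step: showing that the surjection $C_n \to C_n^U$ is actually a bijection. There are two clean strategies, and I would choose whichever is shorter to write. One option is to prove $|C_n^U| = \tfrac{1}{n+1}\binom{2n}{n}$ directly by a bijection between convex upper unitriangular Boolean matrices and a standard Catalan-counted family (for instance Dyck-type lattice paths, obtained by tracing the staircase boundary of the region of $1$-entries), which combined with the surjection forces bijectivity. The other option is to appeal to a known presentation-theoretic description of $C_n^U$ in the literature on the Catalan monoid; but since the lemma seems intended to be self-contained, I expect the intended argument is the explicit combinatorial bijection. In either case the relations check and the generation statement are routine, and the counting argument — pinning down exactly which subsets of the strictly-upper-triangular positions arise as convex matrices and matching them to a Catalan object — is where the real content lies.
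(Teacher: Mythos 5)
Your proposal follows essentially the same route as the paper: generation via an explicit factorisation read off from the staircase of row maxima, and the isomorphism by checking that the $D(i)$ satisfy the defining relations \eqref{rels} (giving a surjection $C_n \to C_n^U$) and then counting $C_n^U$ via Dyck paths to conclude $|C_n^U|=|C_n|$, which forces bijectivity. The only slip is an indexing one you already flagged for verification: right multiplication by $D(i)$ adds column $i$ into column $i+1$, not into column $i-1$.
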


\begin{proof}
Since each $D(i)$ is a convex upper unitriangular matrix, these matrices clearly generate a submonoid of  $C_n^U$. Suppose then that $A \in C_n^U$. Let $m_i = {\rm max}\{j: A_{i,j}=1\}$. Since $A_{i,i}=1$ we note that $m_i \geq i$. Convexity of $A$ yields that $m_i \leq m_j$ whenever $i \leq j$. Define
$$F(i) = \begin{cases}
I_n & \mbox{ if } m_i=i\\
D(i) \cdots D(m_i-1) & \mbox{ if } m_i>i\\
\end{cases}$$
It is straightforward to verify that if $F(i)_{i,j}=1$ if and only if $i \leq j \leq m_i$, and similarly for all $k>i$, we have $F(k)_{i,j}=1$ if and only if $i=j$. Thus the $(i,j)$th coordinate of  $B:=F(n-1) \cdots F(i)$ is non-zero if and only if $i \leq j \leq m_i$. Let $M=BF(i-1)\cdots F(1)$. We claim $A=M$. Since $M \succeq B$, it is clear from the observations above that $M_{i,j}\geq B_{i,j}=1$ for all $i \leq j \leq m_i$. Since $M$ is clearly upper triangular, it remains to show that $M_{i,j}=0$ for all $j > m_i$. To see this, notice that the right action of $D(k)$ on any Boolean matrix $X$ results in the matrix obtained from $X$ by taking the Boolean sum of columns $k$ and $k+1$. By definition, all factors $D(k)$ occurring in $F(j)$ satisfy $j \leq k \leq m_j-1$. For $j<i$ the only factors $D(k)$ occurring in $F(j)$ therefore satisfy $j \leq k \leq m_j-1 \leq m_i-1$. This means that $M$ is obtained from the matrix $B$ by the right action of some collection of matrices $D(k)$ with $k\leq m_i-1$, and hence columns $j >m_i$ of $M$ and $B$ agree.

It is straightforward to verify that the matrices $D(i)$ satisfy the relations \eqref{rels}. Since the elements of $C_n^U$ are in one to one correspondence with the Dyck paths from $(0,0)$ to $(n,n)$, we see that $|C_n^U|=|C_n|$, and so these two monoids must be isomorphic. 
\end{proof}

Let $E(i)$ denote the product $D(i)D(i)^T \in {\rm Conv}_n$. The double Catalan monoid $DC_n$ of Mazorchuk and Steinberg \cite{MS12} is the submonoid of ${\rm Conv}_n$ generated by the matrices $E_1, \ldots, E_{n-1}$. Define $\mathcal{U}: DC_n \rightarrow C_n^U$ to be the map sending a matrix to its \emph{upper profile}, namely $\mathcal{U}(A)_{i,j} = A_{i,j}$ if $i \leq j$ and $\mathcal{U}(A)_{i,j} =0$ otherwise.
\begin{lemma}
\label{surjection}
The map $\mathcal{U}: DC_n \rightarrow C_n^U$ is a surjective monoid homomorphism.
\end{lemma}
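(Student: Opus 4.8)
The claim is that $\mathcal{U}: DC_n \to C_n^U$ sending a convex matrix to its upper profile is a surjective monoid homomorphism. I would split this into three checks: that $\mathcal{U}$ lands inside $C_n^U$, that it is a monoid homomorphism (preserves the identity and respects products), and that it is surjective. The plan is to exploit the generator descriptions already available: $DC_n$ is generated by $E(i) = D(i)D(i)^T$, while by Lemma~\ref{convex} the matrices $D(1), \ldots, D(n-1)$ generate $C_n^U$. Surjectivity should then follow almost for free once I show $\mathcal{U}(E(i)) = D(i)$, since a surjection on a generating set of the image, combined with the homomorphism property, forces surjectivity onto $C_n^U$.

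**The key computation.**
First I would compute $E(i) = D(i)D(i)^T$ explicitly. Since $D(i)$ is the identity plus a single $1$ in position $(i,i+1)$, its transpose has the extra $1$ in position $(i+1,i)$, and the product $E(i)$ is the identity together with $1$'s in positions $(i,i+1)$, $(i+1,i)$, and $(i,i)$, $(i+1,i+1)$ (the latter two already $1$); in particular $E(i)$ is a symmetric convex matrix whose only strictly-upper entry is at $(i,i+1)$. Taking the upper profile then discards the entry at $(i+1,i)$ and returns exactly $D(i)$, so $\mathcal{U}(E(i)) = D(i)$ for each $i$. This is the linchpin of the whole argument and I expect it to be routine but central.

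**The homomorphism property as the main obstacle.**
The genuinely delicate step is verifying that $\mathcal{U}$ respects multiplication, i.e. $\mathcal{U}(AB) = \mathcal{U}(A)\mathcal{U}(B)$ for $A, B \in DC_n$. This is \emph{not} automatic: truncating each factor to its upper triangle before multiplying could in principle lose contributions that route through a lower-diagonal entry of $A$ paired with a lower-diagonal entry of $B$. The plan is to show that for $i \leq j$ no such lost path can ever contribute to $(AB)_{i,j}$. Concretely, $(AB)_{i,j} = \sum_k A_{i,k}B_{k,j}$, and a summand $A_{i,k}B_{k,j}$ with $A_{i,k}=B_{k,j}=1$ can only be non-trivially ``hidden'' by $\mathcal{U}$ if $k < i$ or $k > j$. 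I would argue that convexity of $A$ and $B$ (their diagonals are all $1$, and non-zero entries form intervals in each row and column) prevents a route $i \to k \to j$ with $k<i\leq j$ or $i\leq j<k$ from producing an upper-diagonal entry not already realised by a route with $i \le k \le j$: convexity lets me ``straighten'' any such detour through an intermediate index lying in $[i,j]$, much as the block-chain straightening in the proof of Lemma~\ref{aperiodic}. Once this is established, the identity $\mathcal{U}(I_n)=I_n$ is immediate, and combined with $\mathcal{U}(E(i))=D(i)$ and Lemma~\ref{convex}, surjectivity follows: any $C \in C_n^U$ is a product of $D(i)$'s, hence $C = \prod \mathcal{U}(E(i_t)) = \mathcal{U}\!\left(\prod E(i_t)\right)$ with $\prod E(i_t) \in DC_n$.
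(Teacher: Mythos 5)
Your proposal is correct and takes essentially the same approach as the paper: the paper proves the homomorphism property by exactly the two-case convexity argument you sketch (a route $i \to k \to j$ with $k<i$ is replaced using column-convexity of $B$ together with $A_{i,i}=1$, and one with $k>j$ using row-convexity of $A$ together with $B_{j,j}=1$), and then gets surjectivity from $\mathcal{U}(E(i))=D(i)$ combined with Lemma~\ref{convex}. The only difference is presentational: your ``straightening'' plan is precisely the paper's explicit case analysis, and your computation of $E(i)$ is left implicit there.
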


\begin{proof}
Let $A, B \in DC_n$. By definition $\mathcal{U}(AB)_{i,j} = (AB)_{i,j}$ if $i \leq j$ and $0$ otherwise. Thus the non-zero entries occur in positions $i \leq j$ for which there exists $k$ with $A_{i,k} = B_{k,j}=1$. Note that if there exists such a $k$ with $k<i$, then by the convexity of $B$ we must have $A_{i,i}=B_{i,j}=1$, whilst if there exists such a $k$ with $k>j$, then by the convexity of $A$ we must have $A_{i,j}=B_{j,j}=1$. The non-zero entries of  $\mathcal{U}(AB)$ therefore occur in positions $i,j$ for which there exists $i \leq k \leq j$ with $A_{i,k} = B_{k,j}=1$, and it is easy to see that these coincide with the non-zero entries of  $\mathcal{U}(A) \mathcal{U}(B)$.

Now let $A \in DC_n$. By definition we may write $A = E_{i_1} \cdots E_{i_m}$ for some $1 \leq i_1, \ldots, i_m \leq n$. Applying the morphism $\mathcal{U}$ then yields
$$\mathcal{U}(A) = \mathcal{U}(E_{i_1}) \cdots \mathcal{U}(E_{i_m}) = D_{i_1} \cdots D_{i_m},$$
and the result follows from Lemma \ref{convex}.
\end{proof}

Now let $D(i,j)$ denote the $n \times n$ Boolean matrix with $1$'s on the diagonal and a single off-diagonal $1$ in position $(i,j)$, and let $E(i,j) = D(i,j)D(j,i)$. The \emph{gossip monoid} \cite{BDF15, FJK16} is the submonoid of $M_n(\mathbb{B})$ generated by the set $\{E(i,j): 1 \leq i < j \leq n\}$. The \emph{one directional gossip monoid} $\overline{G}_n$ is the submonoid of $M_n(\mathbb{B})$ generated by the set $\{D(i,j): 1 \leq i \neq j \leq n\}$. It is clear from the definition that $G_n$ is a submonoid of $\overline{G}_n$. Moreover, since $E(i) = E(i,i+1)$ we see that the double Catalan monoid is a submonoid of $G_n$. The names `one-directional gossip monoid' and `gossip monoid' come from the following interpretation of the matrices $D(i,j)$ and $E(i,j)$. Consider a group of $n$ people, each with a unique piece of information or `gossip' they would like to spread. It is clear that we can record the state of knowledge amongst the $n$ people at any given time by means of a Boolean matrix, putting a $1$ in the $(i,j)$th position if and only if person $j$ has learned the piece of gossip originally known only to person $i$. The right action of the matrix $D(i,j)$ on $M_n(\mathbb{B})$ then corresponds to a one-way communication from person $i$ to person $j$, in which person $i$ recounts to person $j$ all of the gossip that they know. The right action of the matrix $E(i,j)$ on $M_n(\mathbb{B})$ corresponds to a two-way communication between person $i$ and person $j$, at the end of which both parties have learned the sum total of gossip known to either $i$ or $j$. The double Catalan monoid can therefore be thought of as an algebraic model of gossip in a network in which person $i$ can communicate only with their nearest neighbours, $i-1$ and $i+1$.

\begin{proposition}
\label{gossip}
Let $n \in \mathbb{N}$. The gossip monoid $G_n$, the one-directional gossip monoid $\bar{G}_n$, and the double Catalan monoid $DC_n$, all satisfy the same set of identities as the reflexive monoid $R_n$.
\end{proposition}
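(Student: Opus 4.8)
The plan is to prove the equality of all four identity sets by a squeeze argument that chains together the containments already recorded in this section, closing the chain by passing to a quotient. First I would assemble the relevant submonoid inclusions. Since $DC_n$ is generated by the $E_i = E(i,i+1)$ it sits inside $G_n$; since each $E(i,j)=D(i,j)D(j,i)$ it is clear that $G_n \subseteq \overline{G}_n$; and since every generator $D(i,j)$ of $\overline{G}_n$ is reflexive (all diagonal entries equal to $1$) while the Boolean product of reflexive matrices is again reflexive, every element of $\overline{G}_n$ lies in $R_n = R_n(\mathbb{B})$. This yields the chain of submonoid inclusions
$$DC_n \subseteq G_n \subseteq \overline{G}_n \subseteq R_n.$$

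Next I would apply the elementary principle that passing to a subsemigroup can only enlarge the set of satisfied identities: if $A$ is a subsemigroup of $B$ then every morphism into $A$ is also a morphism into $B$, so ${\rm Id}(B) \subseteq {\rm Id}(A)$. Reading this along the chain above gives
$${\rm Id}(R_n) \subseteq {\rm Id}(\overline{G}_n) \subseteq {\rm Id}(G_n) \subseteq {\rm Id}(DC_n).$$
To close the loop I would invoke the dual principle for homomorphic images: if $\theta\colon A \to B$ is a surjective morphism then, lifting generators through $\theta$, any morphism into $B$ factors through one into $A$, whence ${\rm Id}(A) \subseteq {\rm Id}(B)$. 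By Lemma~\ref{surjection} the upper-profile map $\mathcal{U}\colon DC_n \to C_n^U$ is a surjective monoid homomorphism, and by Lemma~\ref{convex} we have $C_n^U \cong C_n$; hence $C_n$ is a homomorphic image of $DC_n$ and so ${\rm Id}(DC_n) \subseteq {\rm Id}(C_n)$. Finally I would appeal to Volkov's result \cite{V04} (recalled above) that ${\rm Id}(C_n) = {\rm Id}(R_n)$. Splicing everything together produces
$${\rm Id}(R_n) \subseteq {\rm Id}(\overline{G}_n) \subseteq {\rm Id}(G_n) \subseteq {\rm Id}(DC_n) \subseteq {\rm Id}(C_n) = {\rm Id}(R_n),$$
forcing every inclusion to be an equality and establishing the proposition.

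I do not expect any genuine obstacle, since all the substantive work is already carried out in Lemmas~\ref{convex} and~\ref{surjection} and in the cited theorem of Volkov; the proof is essentially a diagram chase. The only points that require care are bookkeeping ones: verifying that reflexivity is preserved under Boolean products (so the chain really terminates inside $R_n$), and, above all, keeping the two opposite directions of identity-set containment straight, since subsemigroups and homomorphic images behave oppositely with respect to ${\rm Id}(\cdot)$. The conceptual crux is recognising that the quotient $\mathcal{U}\colon DC_n \twoheadrightarrow C_n \cong C_n^U$ is exactly what is needed to turn the one-sided tower of inclusions into a closed cycle.
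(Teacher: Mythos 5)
Your proof is correct and follows essentially the same route as the paper: the chain of submonoid inclusions $DC_n \subseteq G_n \subseteq \overline{G}_n \subseteq R_n$ giving one direction of identity containments, closed into a cycle via the surjection $\mathcal{U}\colon DC_n \to C_n^U \cong C_n$ from Lemmas~\ref{surjection} and~\ref{convex}, and Volkov's result that ${\rm Id}(C_n) = {\rm Id}(R_n)$. The only difference is that you spell out the bookkeeping (directions of containment, preservation of reflexivity) more explicitly than the paper does.
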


\begin{proof}
It is clear from the above definitions that $DC_n \subseteq G_n \subseteq \overline{G_n} \subseteq R_n$. Thus  ${\rm Id}(DC_n) \supseteq {\rm Id}(G_n) \supseteq {\rm Id}(\overline{G_n}) \supseteq \rm{Id}(R_n)$. By  Lemma \ref{surjection}, there is a surjective monoid homomorphism from $\mathcal{U}: DC_n \rightarrow C_n^U$, from which it follows that ${\rm Id}(C_n^U) \supseteq {\rm Id}(DC_n)$. The result then follows from the fact that $R_n$ and $C_n$ satisfy the same identities \cite{V04}.
\end{proof}

Now let $S$ be an interval semiring and for each $s \in S$ define: $D(i,j;s)$ to be the matrix with $1$'s on the diagonal and a single off-diagonal entry $s$ in position $(i, j)$;
and  $E(i,j; s)=D(i,j;s)D(j,i; s)$. Then we may define monoids:
\begin{eqnarray*}
C_n^U(S)&:=& \langle D(i,i+1; s): 1 \leq i \leq n-1, s \in S \rangle\\
DC_n(S)&:=& \langle E(i,i+1; s): 1 \leq i \leq n-1, s \in S \rangle\\
G_n(S)&:=& \langle E(i,j; s): 1 \leq i,j \leq n-1, s \in S \rangle\\
\overline{G_n}(S)&:=& \langle D(i,j; s): 1 \leq i,j \leq n-1, s \in S \rangle
\end{eqnarray*}
 Since $S$ is an interval semiring, we note that each is a submonoid of $R_n(S)$.

\begin{proposition}
\label{fatalan}
Let $S$ be an interval semiring.  The monoids $C_n^U(S)$, $DC_n(S)$, $G_n(S)$ and $\overline{G_n}(S)$ satisfy the same identities as the monoid $R_n(S)$.
\end{proposition}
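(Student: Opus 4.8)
The plan is to sandwich the identities of each of the four monoids between ${\rm Id}(R_n(S))$ from below and the identities of its Boolean counterpart from above, and then to observe that these two bounds coincide. Throughout, write $M$ for one of the four monoids $C_n^U(S)$, $DC_n(S)$, $G_n(S)$, $\overline{G_n}(S)$, and $M_{\mathbb{B}}$ for the corresponding Boolean monoid ($C_n^U$, $DC_n$, $G_n$, $\overline{G_n}$). The lower bound is immediate: as already observed, each such $M$ is a submonoid of $R_n(S)$, and every identity satisfied by a monoid is satisfied by each of its submonoids, so ${\rm Id}(R_n(S)) \subseteq {\rm Id}(M)$. It therefore remains to establish the reverse inclusion ${\rm Id}(M) \subseteq {\rm Id}(R_n(S))$ in each case.

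For the reverse inclusion I would exploit the copy of $\mathbb{B}$ sitting inside $S$. Since $S$ is idempotent we have $1_S + 1_S = 1_S$, so $\{0_S, 1_S\}$ is closed under both operations of $S$ and the assignment $0 \mapsto 0_S$, $1 \mapsto 1_S$ is a semiring embedding $\mathbb{B} \hookrightarrow S$. This induces an embedding $M_n(\mathbb{B}) \hookrightarrow M_n(S)$ carrying each Boolean generator to its $1_S$-labelled analogue (so $D(i,j) \mapsto D(i,j;1_S)$, $E(i,j) \mapsto E(i,j;1_S)$, and so on). Consequently each Boolean monoid $M_{\mathbb{B}}$ is carried isomorphically onto the submonoid of $M$ generated by the $1_S$-labelled generators; in particular $M_{\mathbb{B}}$ is (isomorphic to) a submonoid of $M$. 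Applying once more the fact that every identity of a monoid holds in each of its submonoids, we obtain ${\rm Id}(M) \subseteq {\rm Id}(M_{\mathbb{B}})$.

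Finally I would close the loop using the results already in hand. By Proposition \ref{gossip}, together with Lemma \ref{convex} (which identifies $C_n^U$ with $C_n$), each of the Boolean monoids $\overline{G_n}$, $G_n$, $DC_n$, $C_n^U$ satisfies exactly the identities of $R_n$, so ${\rm Id}(M_{\mathbb{B}}) = {\rm Id}(R_n)$. Moreover, applying Theorem \ref{thm_reflexive} to both $R_n = R_n(\mathbb{B})$ and $R_n(S)$ shows that in each case ${\rm Id}$ consists precisely of the identities $w=v$ for which $w$ and $v$ share the same scattered subwords of length at most $n-1$; hence ${\rm Id}(R_n) = {\rm Id}(R_n(S))$. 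Combining these facts gives
\[
{\rm Id}(R_n(S)) \ \subseteq \ {\rm Id}(M) \ \subseteq \ {\rm Id}(M_{\mathbb{B}}) \ = \ {\rm Id}(R_n) \ = \ {\rm Id}(R_n(S)),
\]
forcing equality ${\rm Id}(M) = {\rm Id}(R_n(S))$, and this argument treats all four monoids uniformly.

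I expect the only genuinely delicate point to be the verification underlying the second paragraph: that the $1_S$-labelled generators really do generate an isomorphic copy of $M_{\mathbb{B}}$. Concretely, one must check that every entry arising in a product of such generators stays within $\{0_S, 1_S\}$ and that the $S$-arithmetic restricted to $\{0_S,1_S\}$ reproduces Boolean arithmetic — this is exactly where idempotency of $S$ is used. This route has the advantage of avoiding an $S$-analogue of the upper-profile homomorphism of Lemma \ref{surjection}; pursuing that alternative directly would instead require generalising the convexity argument to interval semirings (showing, for matrices in $DC_n(S)$, that $B_{k,j} \leq B_{i,j}$ whenever $k \leq i \leq j$, together with the transposed statement, so that the off-profile terms are absorbed), and I would expect that convexity generalisation to be the main obstacle along the alternative path.
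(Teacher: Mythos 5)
Your proof is correct and follows essentially the same route as the paper: a sandwich ${\rm Id}(R_n(S)) \subseteq {\rm Id}(M) \subseteq {\rm Id}(M_{\mathbb{B}})$ obtained from submonoid inclusions (the paper gets the second inclusion from the chains $DC_n \subseteq DC_n(S) \subseteq \cdots \subseteq R_n(S)$ and $C_n^U \subseteq C_n^U(S) \subseteq R_n(S)$, which is exactly your $\{0_S,1_S\}$-embedding observation made explicit), closed up using Proposition \ref{gossip} and Theorem \ref{thm_reflexive}. The only differences are cosmetic: you treat the four monoids uniformly rather than via nested chains, and you spell out the $\mathbb{B} \hookrightarrow S$ embedding that the paper leaves as ``clear from the definitions.''
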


\begin{proof}
It is clear from the definitions that 
$$DC_n \subseteq DC_n(S) \subseteq \overline{G_n}(S) \subseteq G_n(S) \subseteq R_n(S)$$
and
$$C_n^U \subseteq C_n^U(S) \subseteq  R_n(S).$$
Thus by Proposition \ref{fatalan} and Theorem \ref{thm_reflexive} we deduce that each of these monoids satisfies the same set of identities. 
\end{proof}

In the case where $S$ is the subsemiring $[ 0, +\infty]$ of the (min-plus) tropical semiring, it is straightforward to verify that the monoid $G_n(S)$ is precisely the lossy gossip monoid $\mathcal{G}_n$ of \cite{BDF15}.

\begin{corollary}
The lossy gossip monoid is finitely based for $n \leq 4$ and non-finitely based otherwise.
\end{corollary}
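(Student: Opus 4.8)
The plan is to show that the lossy gossip monoid generates the variety $\mathbf{J_{n-1}}$ and then to quote the known finite basis classification of these varieties. First I would use the identification recorded immediately above: $\mathcal{G}_n = G_n(S)$, where $S = [0,+\infty]$ is the subsemiring of the min-plus tropical semiring. The one point needing verification here is that $S$ really is an interval semiring, so that the results of the previous sections apply. This is routine: addition $\oplus = \min$ is idempotent with additive identity $+\infty = 0_S$, multiplication is ordinary addition with multiplicative identity $0 = 1_S$, and since $a \leq b$ in the natural order means $\min(a,b)=b$, i.e.\ $a \geq b$ in the usual order, the usual least element $0 = 1_S$ is the greatest element of $S$ in the natural partial order. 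Thus $S$ is an interval semiring and $\mathcal{G}_n$ falls within the scope of Proposition \ref{fatalan} and Theorem \ref{thm_reflexive}.

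Next I would assemble the chain of equational equivalences. By Proposition \ref{fatalan}, $G_n(S)$ satisfies exactly the same identities as $R_n(S)$, and by Theorem \ref{thm_reflexive} the identities holding in $R_n(S)$ are precisely those $w=v$ for which $w$ and $v$ have the same scattered subwords of length at most $n-1$. This is, by the result of Volkov \cite{V04} recalled above, exactly the equational theory of the reflexive monoid $R_n$, and hence $\mathcal{G}_n$ and $R_n$ generate the same variety, namely $\mathbf{J_{n-1}}$. Consequently ${\rm Id}(\mathcal{G}_n) = {\rm Id}(R_n)$, and $\mathcal{G}_n$ admits a finite basis of identities if and only if $R_n$ does, i.e.\ if and only if the variety $\mathbf{J_{n-1}}$ is finitely based.

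Finally I would invoke the known finite basis status of the varieties $\mathbf{J_m}$ recalled in the introduction: $\mathbf{J_m}$ is finitely based precisely when $m \leq 3$. Taking $m = n-1$ gives that $\mathcal{G}_n$ is finitely based exactly when $n-1 \leq 3$, that is, when $n \leq 4$, and non-finitely based otherwise. The mathematical content has in effect already been discharged by Theorem \ref{thm_reflexive} and Proposition \ref{fatalan}; the only steps requiring genuine care are the verification that $[0,+\infty]$ is an interval semiring (which licences the application of those results) and the correct alignment of the scattered-subword criterion with Volkov's description of $\mathbf{J_{n-1}}$, so that the external finite basis classification can be applied with the index correctly shifted from $n-1$ to the cut-off $n \leq 4$. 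I would expect this bookkeeping, rather than any substantive argument, to be the main thing to get right.
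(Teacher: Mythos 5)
Your proposal is correct and follows essentially the same route as the paper: identify $\mathcal{G}_n$ with $G_n(S)$ for the min-plus interval semiring $S=[0,+\infty]$ (your verification of the interval property matches the paper's third listed example, up to the isomorphism $x \mapsto -x$ with $(\mathbb{R}_{\leq 0}\cup\{-\infty\},\otimes,\oplus)$), then chain Proposition \ref{fatalan}, Theorem \ref{thm_reflexive} and Volkov's result to conclude ${\rm Id}(\mathcal{G}_n)={\rm Id}(R_n)$ generates $\mathbf{J_{n-1}}$, and finally quote the known classification that $\mathbf{J_{n-1}}$ is finitely based precisely for $n\leq 4$. No gaps; this is exactly the argument the paper leaves implicit.
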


\section*{Acknowledgements} The authors thank Professor Volkov for suggesting the topic of this article.

\bibliography{refs}

\begin{thebibliography}{10}

\bibitem{ACHLV15}
K.~Auinger, Yuzhu Chen, Xun Hu, Yanfeng Luo, and M.~V. Volkov.
\newblock The finite basis problem for {K}auffman monoids.
\newblock {\em Algebra Universalis}, 74(3-4):333--350, 2015.

\bibitem{BDF15}
Andries~E. Brouwer, Jan Draisma, and Bart~J. Frenk.
\newblock Lossy gossip and composition of metrics.
\newblock {\em Discrete Comput. Geom.}, 53(4):890--913, 2015.

\bibitem{CHL16}
Yuzhu Chen, Xun Hu, and Yanfeng Luo.
\newblock The finite basis property of a certain semigroup of upper triangular
  matrices over a field.
\newblock {\em J. Algebra Appl.}, 15(9):1650177, 14, 2016.

\bibitem{CHLS16}
Yuzhu Chen, Xun Hu, Yanfeng Luo, and Olga Sapir.
\newblock The finite basis problem for the monoid of two-by-two upper
  triangular tropical matrices.
\newblock {\em Bull. Aust. Math. Soc.}, 94(1):54--64, 2016.

\bibitem{DJK17}
Laure Daviaud, Marianne Johnson, and Mark Kambites.
\newblock Identities in upper triangular tropical matrix semigroups and the
  bicyclic monoid.
\newblock {\em J. Algebra}, 501:503--525, 2018.

\bibitem{FJK16}
P.~{Fenner}, M.~{Johnson}, and M.~{Kambites}.
\newblock {NP}-completeness in the gossip monoid.
\newblock {\em International Journal of Algebra and Computation}.

\bibitem{MS12}
Volodymyr Mazorchuk and Benjamin Steinberg.
\newblock Double {C}atalan monoids.
\newblock {\em J. Algebraic Combin.}, 36(3):333--354, 2012.

\bibitem{OP64}
Sheila Oates and M.~B. Powell.
\newblock Identical relations in finite groups.
\newblock {\em J. Algebra}, 1:11--39, 1964.

\bibitem{P69}
Peter Perkins.
\newblock Bases for equational theories of semigroups.
\newblock {\em J. Algebra}, 11:298--314, 1969.

\bibitem{Shn89}
L.~M. Shneerson.
\newblock On the axiomatic rank of varieties generated by a semigroup or monoid
  with one defining relation.
\newblock {\em Semigroup Forum}, 39(1):17--38, 1989.

\bibitem{Simon}
Imre Simon.
\newblock {\em Hierarchies of events with dot--depth one}.
\newblock Ph. D. Thesis, University of Waterloo (1972).

\bibitem{Solomon}
Andrew Solomon.
\newblock Catalan monoids, monoids of local endomorphisms, and their
  presentations.
\newblock {\em Semigroup Forum}, 53(3):351--368, 1996.

\bibitem{V01}
M.~V. Volkov.
\newblock The finite basis problem for finite semigroups.
\newblock {\em Sci. Math. Jpn.}, 53(1):171--199, 2001.

\bibitem{V04}
M.~V. Volkov.
\newblock Reflexive relations, extensive transformations and piecewise testable
  languages of a given height.
\newblock {\em Internat. J. Algebra Comput.}, 14(5-6):817--827, 2004.
\newblock International Conference on Semigroups and Groups in honor of the
  65th birthday of Prof. John Rhodes.

\bibitem{V15}
M.~V. Volkov.
\newblock A nonfinitely based semigroup of triangular matrices.
\newblock In {\em Semigroups, algebras and operator theory}, volume 142 of {\em
  Springer Proc. Math. Stat.}, pages 27--38. Springer, New Delhi, 2015.

\bibitem{ZJL17}
Wen~Ting Zhang, Ying~Dan Ji, and Yan~Feng Luo.
\newblock The finite basis problem for infinite involution semigroups of
  triangular {$2\times2$} matrices.
\newblock {\em Semigroup Forum}, 94(2):426--441, 2017.

\end{thebibliography}
\bibliographystyle{plain}
\end{document}